\begin{document}
\baselineskip 16pt

\newcommand\C{{\mathbb C}}
\newtheorem{theorem}{Theorem}[section]
\newtheorem{proposition}[theorem]{Proposition}
\newtheorem{lemma}[theorem]{Lemma}
\newtheorem{corollary}[theorem]{Corollary}
\newtheorem{remark}[theorem]{Remark}
\newtheorem{example}[theorem]{Example}
\newtheorem{question}[theorem]{Question}
\newtheorem{exercise}[theorem]{Exercise}
\newtheorem{definition}[theorem]{Definition}
\newtheorem{conjecture}[theorem]{Conjecture}
\newcommand\RR{\mathbb{R}}
\newcommand{\la}{\lambda}
\def\RN {\mathbb{R}^n}
\newcommand{\norm}[1]{\left\Vert#1\right\Vert}
\newcommand{\abs}[1]{\left\vert#1\right\vert}
\newcommand{\set}[1]{\left\{#1\right\}}
\newcommand{\Real}{\mathbb{R}}
\newcommand{\supp}{\operatorname{supp}}
\newcommand{\card}{\operatorname{card}}
\renewcommand{\L}{\mathcal{L}}
\renewcommand{\P}{\mathcal{P}}
\newcommand{\T}{\mathcal{T}}
\newcommand{\A}{\mathbb{A}}
\newcommand{\K}{\mathcal{K}}
\renewcommand{\S}{\mathcal{S}}
\newcommand{\blue}[1]{\textcolor{blue}{#1}}
\newcommand{\red}[1]{\textcolor{red}{#1}}
\newcommand{\Id}{\operatorname{I}}
\newcommand\wrt{\,{\rm d}}
\def\SH{\sqrt {H}}

\newcommand{\rn}{\mathbb R^n}
\newcommand{\de}{\delta}
\newcommand{\tf}{\tfrac}
\newcommand{\ep}{\epsilon}
\newcommand{\vp}{\varphi}

\newcommand{\mar}[1]{{\marginpar{\sffamily{\scriptsize
        #1}}}}

\newcommand\CC{\mathbb{C}}
\newcommand\NN{\mathbb{N}}
\newcommand\ZZ{\mathbb{Z}}
\renewcommand\Re{\operatorname{Re}}
\renewcommand\Im{\operatorname{Im}}
\newcommand{\mc}{\mathcal}
\newcommand\D{\mathcal{D}}
\newcommand{\al}{\alpha}
\newcommand{\nf}{\infty}
\newcommand{\comment}[1]{\vskip.3cm
	\fbox{%
		\color{red}
		\parbox{0.93\linewidth}{\footnotesize #1}}
	\vskip.3cm}

\newcommand{\disappear}[1]

\numberwithin{equation}{section}
\newcommand{\chg}[1]{{\color{red}{#1}}}
\newcommand{\note}[1]{{\color{green}{#1}}}
\newcommand{\later}[1]{{\color{blue}{#1}}}
\newcommand{\bchi}{ {\chi}}

\numberwithin{equation}{section}
\newcommand\relphantom[1]{\mathrel{\phantom{#1}}}
\newcommand\ve{\varepsilon}  \newcommand\tve{t_{\varepsilon}}
\newcommand\vf{\varphi}      \newcommand\yvf{y_{\varphi}}
\newcommand\bfE{\mathbf{E}}
\newcommand{\ale}{\text{a.e. }}

 \newcommand{\mm}{\mathbf m}
\newcommand{\Be}{\begin{equation}}
\newcommand{\Ee}{\end{equation}}

\title[A weighted $L^2$ estimate of Commutators for Hermite operator]
{A weighted $L^2$ estimate of Commutators of Bochner-Riesz Operators  for Hermite operator}

\author[Peng Chen and  Xixi Lin]{Peng Chen and  Xixi Lin}
\thanks{Corresponding author: Xixi Lin}
\address{Peng Chen, Department of Mathematics, Sun Yat-sen
	University, Guangzhou, 510275, P.R. China}
\email{chenpeng3@mail.sysu.edu.cn}
\address{Xixi Lin, Department of Mathematics, Sun Yat-sen   University,
	Guangzhou, 510275, P.R. China}
\email{linxx58@mail2.sysu.edu.cn}

\subjclass[2000]{42B15, 42B25, 47F05.}
\keywords{ Weighted $L^2$ estimate, Commutators, Bochner-Riesz operators,  Hermite operator}

\begin{abstract}
%
%
Let $H$ be the Hermite operator $-\Delta +|x|^2$ on $\mathbb{R}^n$. We prove a weighted $L^2$ estimate of the maximal commutator operator
$\sup_{R>0}|[b, S_R^\lambda(H)](f)|$, where
$
[b, S_R^\lambda(H)](f) = bS_R^\lambda(H) f - S_R^\lambda(H)(bf)
$
is the commutator of a BMO function  $b$ and the Bochner-Riesz means
$S_R^\lambda(H)$ for the Hermite operator $H$. As  an application, we obtain the almost everywhere convergence of $[b, S_R^\lambda(H)](f)$ for large $\lambda $  and  $f\in L^p(\mathbb{R}^n)$.	
\end{abstract}

\maketitle
\section{Introduction}
\setcounter{equation}{0}

Let $H$ denote the Hermite operator
   \begin{eqnarray}\label{h1}
 -\Delta + |x|^2 =-\sum_{i=1}^n {\partial^2\over \partial x_i^2} + |x|^2, \quad x=(x_1, \cdots, x_n), \ \ \  n\geq 1.
\end{eqnarray}
The operator $H$ is non-negative and self-adjoint with respect to the Lebesgue measure
on $\RN$. The spectrum of the operator $H$ is given by the set $ {2\mathbb N_0} + n$. Here ${\mathbb N_0}$ denotes the set of nonnegative integers.
 For each non-negative integer $k$, the Hermite polynomials $H_k(t) $ on $\RR$ are
defined by $H_k(t)=(-1)^k e^{t^2} {d^k\over d t^k} \big(e^{-t^2}\big)$, and  the Hermite functions
$h_k(t):=(2^k k !  \sqrt{\pi})^{-1/2} H_k(t) e^{-t^2/2}$, $k=0, 1, 2, \ldots$ form an orthonormal basis
of $L^2(\mathbb R)$.
For
any multiindex $\mu\in {\mathbb N}^n_0$,
the $n$-dimensional Hermite functions are given by tensor product of the one dimensional Hermite functions:
\begin{eqnarray}\label{ephi}
	\Phi_{\mu}(x)=\prod_{i=1}^n h_{\mu_i}(x_i), \quad \mu=(\mu_1, \cdots, \mu_n).
\end{eqnarray}
Then the functions $\Phi_{\mu}$ are eigenfunctions for
the Hermite operator with eigenvalue $(2|\mu|+n)$ and $\{\Phi_{\mu}\}_{\mu\in \mathbb N_0^n}$ form a complete orthonormal
system in $L^2({\RN})$.
Thus, for  every $f\in L^2(\RN)$  we have  the Hermite expansion
\begin{eqnarray} \label{e1.3}
	f(x)=\sum_{\mu\in \NN_0^n}\langle f, \Phi_{\mu}\rangle \Phi_\mu(x)=\sum_{k=0}^{\infty}P_kf(x),
\end{eqnarray}
where $P_k$ denotes the Hermite projection operator given by
\begin{eqnarray} \label{e1.5}
	P_kf(x)=\sum_{2|\mu|+n=k}\langle f, \Phi_{\mu}\rangle\Phi_\mu(x).
\end{eqnarray}
For $R>0$ the  Bochner-Riesz means for $H$ of order $\lambda\geq 0$   are defined by
\begin{eqnarray}\label{e1.4}
	S_R^{\lambda}(H)f(x)
	=
	\sum_{k=0}^{\infty} \left(1-{2k+n\over R^2}\right)_+^{\lambda} P_k f(x).
\end{eqnarray}
The assumption $\lambda\geq 0$ is necessary for $S_R^{\lambda}(H)$  to be   defined for all  $R>0$. Note that
	$S_R^{\lambda}(H)f$ can not be defined with $R^2=2k+n$ if  $\lambda<0$.

On the space $\mathbb{R}$,
Thangavelu \cite{T1} showed that $S_R^\lambda(H)$ is uniformly bounded on $L^p(\mathbb{R})$ for $1\leq p\leq \infty$
provided $\lambda>1/6$; if $0<\lambda<1/6$,  the uniformly boundedness of $S_R^\lambda(H)$  holds if and only if
$4/(6\lambda+3) <p<4/(1-6\lambda)$. On the space $\mathbb{R}^n$ for dimension $n\geq2$, if $\lambda> (n-1)/2$, Thangavelu \cite{T2} showed that $S_R^\lambda(H)$ is uniformly bounded on $L^p(\mathbb{R}^n)$ for $1\leq p\leq \infty$.
Let $n\geq2,\  0\leq\lambda\leq (n-1)/2$ and $p\in[1,\infty]\backslash\, \{2\}$, it was conjectured (see \cite[p.259]{T3}) that $S_R^\lambda(H)$ is bounded on $L^p(\mathbb{R}^n)$ uniformly in $R$ if and only if
$$\lambda>\lambda(p)=\max\Bigg\{n\left|\frac1p-\frac12\right|-\frac12,0\Bigg\}.$$
Thangavelu showed that the $L^p$ boundedness of $S_R^\lambda(H)$ fail if  $\lambda<\lambda(p)$.
Karadzhov \cite{K} showed  the $L^p$ boundedness of $S_R^\lambda(H)$  by an optimal $L^2$–$L^p$ spectral projection estimate when $p$ is in the range of $[1,2n/(n+2)]\cup[2n/(n-2),\infty]$ and $\lambda>\lambda(p)$.
Recently, Lee and Ryu \cite{LR} invalidated the above conjecture by showing that $\sup_{R>0}\|S_R^\lambda(H)\|_{L^p\rightarrow L^p}\leq C$ only if $\lambda\geq-1/(3p)+n/3(1/2-1/p)>\lambda(p)$ when $p\in(2(n+1)/n,2(2n-1)/(2n-3))$ for $n\geq2$.
 Concerning the estimate of maximal operator,  it is known (see \cite{CLSY}) that the maximal operator $\sup_{R>0}|S_R^\lambda(H)f|$ is bounded on $L^p(\RR^n)$ for $n\geq2$ whenever $p\geq2n/(n-2)$ and $\lambda>\lambda(p)$.   Further,
 the first author, Duong, He, Lee and Yan \cite{CDHLY}  proved that $\sup_{R>0}|S_R^\lambda(H)f|$ is bounded on $L^2(\RR^n,(1+|x|)^{-\alpha})$ if $\lambda>\max\{(\alpha-1)/4,0\},$ which implied the a.e. convergence of $S^\lambda_R(H)f$ for all $f\in L^p(\RR^n)$ provided that $\lambda>\lambda(p)/2$ with $p\in[2,\infty)$.

Consider the commutator, given an operator $T$ and  a local integral function $b$, the commutator of $T$ and $b$ is defined as follow
  $$[b,T]f(x):=bTf(x)-T(bf)(x).$$
It's well known that Coifman, Rochberg and Weiss \cite{CRW}  characterized the boundedness of the commutator $[b,T]$
with Riesz transforms and $b\in \mathrm{BMO}$.
Since then, lots of investigation had came out of this work:
generalizations to space of homogeneous type space by Uchiyama~\cite{U2}; multi-parameter extensions by Ferguson and  Lacey~\cite{FL} and by Lacey, Petermichl, Pipher and Wick~\cite{LPPW1}; in two weight setting by Holmes, Lacey and Wick~\cite{HLW}; $L^p$ to $L^q$ boundedness and  application with Jocobian operator by Hyt\"onen~\cite{H};  div-curl lemmas by Coifman, Lions, Meyer and Semmes~\cite{CLMS} and by Lacey, Petermichl, Pipher and Wick~\cite{LPPW2}; additional interpretations in operator theory by Uchiyama~\cite{U1} and by Nazarov, Pisier, Treil and Volberg~\cite{NPTV}; commutators with classical Bochner-Riesz operators by \'{A}lvarez, Bagby, Kurtz and P\'{e}rez~\cite{ABKP} and by Hu and Lu~\cite{HL1,HL3}.

In \cite{CLY}, the authors of this article and   Yan   studied   the $L^p$-boundedness of  the commutator $[b, S_R^\lambda(H)](f)$ of a BMO function  $b$ and the Bochner-Riesz means
$S_R^\lambda(H)$, which is defined by
$$
[b, S_R^\lambda(H)](f) = bS_R^\lambda(H) f - S_R^\lambda(H)(bf).
$$
 They showed that  if $n\geq2$,  $1\leq p\leq2n/(n+2)$ and $\lambda>\lambda(p)$, then for all $b\in \mathrm{BMO(\mathbb{R}^n)}$ and all $q\in(p,p')$,
	$$\sup_{R>0} \big\| \big[b,S_R^{\lambda}(H)\big ]  \big\|_{q\rightarrow q}\leq C\|b\|_{\mathrm{BMO}}.
	$$
 The purpose of this paper is to follow this line to
  establish  the weighted $L^2$ estimates of commutator of $S_R^\lambda(H)$ and a BMO  function $b$. Our main result is the following.

\begin{theorem}\label{thm1.1}
  Let $b\in\mathrm{BMO}(\mathbb{R}^n)$. For $0\leq \alpha<n$, if $\lambda>\max\{(\alpha-1)/4,0\}$, then
  \begin{align}\label{e1}
    \int_{\mathbb{R}^n} \sup_{R>0}| [b,S^{\lambda}_R(H)]f(x)|^2(1+|x|)^{-\alpha}\mathrm{d}x\leq C(n,\alpha,\lambda)\|b\|_{\mathrm{BMO}}^2 \int_{\mathbb{R}^n} |f(x)|^2(1+|x|)^{-\alpha}\mathrm{d}x.
  \end{align}
\end{theorem}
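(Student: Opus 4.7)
The plan is to prove Theorem \ref{thm1.1} via the Coifman--Rochberg--Weiss conjugation trick, thereby reducing the commutator estimate to a weighted $L^2$ bound for the non-commutator maximal Bochner--Riesz operator under a one-parameter family of perturbed weights. Write $w_\alpha(x) := (1+|x|)^{-\alpha}$. For $z \in \mathbb{C}$ and $R>0$, introduce the analytic family
\begin{equation*}
T_{z,R}f(x) := e^{zb(x)}\, S_R^{\lambda}(H)\!\left(e^{-zb}f\right)(x),
\end{equation*}
so that $T_{0,R} = S_R^{\lambda}(H)$ and $\partial_z T_{z,R}f|_{z=0} = [b, S_R^{\lambda}(H)]f$. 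Cauchy's integral formula then gives, for every $r > 0$,
\begin{equation*}
[b, S_R^{\lambda}(H)]f(x) = \frac{1}{2\pi i}\oint_{|z|=r} \frac{T_{z,R} f(x)}{z^2}\, dz.
\end{equation*}

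Parameterising $z = re^{i\theta}$, taking $\sup_{R>0}$ inside and then the $L^2(w_\alpha)$-norm, Minkowski's integral inequality yields
\begin{equation*}
\left\|\sup_{R>0} \bigl|[b, S_R^{\lambda}(H)]f\bigr| \right\|_{L^2(w_\alpha)} \leq \frac{1}{r} \sup_{|z|=r}\left\|\sup_{R>0}|T_{z,R} f|\right\|_{L^2(w_\alpha)}.
\end{equation*}
Since $|T_{z,R}f(x)| \leq e^{\operatorname{Re}(z)\, b(x)}\sup_{R>0} |S_R^{\lambda}(H)(e^{-zb}f)(x)|$, the right-hand side is dominated by
\begin{equation*}
\frac{C}{r}\sup_{|z|=r}\left\|\sup_{R>0}|S_R^{\lambda}(H)(e^{-zb}f)|\right\|_{L^2(e^{2\operatorname{Re}(z)b}w_\alpha)}.
\end{equation*}
Exploiting the pointwise identity $|e^{-zb(x)}|^2 e^{2\operatorname{Re}(z)b(x)} \equiv 1$, the estimate \eqref{e1} will follow upon choosing $r \sim 1/\|b\|_{\mathrm{BMO}}$, provided we can establish the \emph{uniform perturbed} weighted maximal estimate
\begin{equation*}
\left\|\sup_{R>0} |S_R^{\lambda}(H)g|\right\|_{L^2(e^{tb}w_\alpha)} \leq C\, \|g\|_{L^2(e^{tb}w_\alpha)}, \qquad |t| \leq c/\|b\|_{\mathrm{BMO}},
\end{equation*}
with a constant $C$ independent of $t$.

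The main obstacle is establishing precisely this perturbed weighted estimate. Its $t = 0$ case is the weighted bound of Chen--Duong--He--Lee--Yan quoted in the introduction. By John--Nirenberg's inequality, $e^{tb}$ lies in $A_2(\mathbb{R}^n)$ with $[e^{tb}]_{A_2} \leq 2$ once $|t|$ is a sufficiently small multiple of $1/\|b\|_{\mathrm{BMO}}$, so $e^{tb}w_\alpha$ remains in a controlled subclass of $A_2$ weights whose decay at infinity is governed by $w_\alpha$. The plan is to revisit the argument underlying the unperturbed estimate---which decomposes $S_R^{\lambda}(H)$ spectrally or dyadically and controls each piece through heat-kernel bounds for $H$ combined with weighted Fefferman--Stein--type maximal inequalities---and verify that the scheme is robust under a small $A_2$ perturbation, with constants depending only on $\|b\|_{\mathrm{BMO}}$, $\alpha$, $\lambda$ and $n$. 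Alternatively, one can formulate a more general weighted maximal estimate for $\sup_{R>0}|S_R^{\lambda}(H)|$ valid across a whole $A_2$-type class that simultaneously contains $w_\alpha$ and its exponential perturbations, and then deduce Theorem \ref{thm1.1} as the special case $w = w_\alpha$. With the perturbed estimate in hand, combining the reductions above and optimising $r = c/\|b\|_{\mathrm{BMO}}$ produces \eqref{e1} with constant proportional to $\|b\|_{\mathrm{BMO}}^2$.
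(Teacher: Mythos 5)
Your reduction via the Coifman--Rochberg--Weiss conjugation $T_{z,R}f=e^{zb}S_R^\lambda(H)(e^{-zb}f)$ and Cauchy's formula is formally correct, but it transfers the entire content of the theorem into the ``uniform perturbed weighted maximal estimate'' $\|\sup_{R}|S_R^\lambda(H)g|\|_{L^2(e^{tb}w_\alpha)}\le C\|g\|_{L^2(e^{tb}w_\alpha)}$ for $|t|\le c/\|b\|_{\mathrm{BMO}}$, which you do not prove and which is not a routine perturbation of the $t=0$ case. The unperturbed bound of Chen--Duong--He--Lee--Yan is \emph{not} an $A_2$-type estimate: its proof hinges on the trace lemma \eqref{e004} and the weighted Plancherel estimate \eqref{e4}, which give the spectral cluster $\chi_{[k,k+1)}(H)$ a gain of $k^{-1/4}$ from $L^2$ into $L^2((1+|x|)^{-\alpha})$ precisely because the weight is the radial power $(1+|x|)^{-\alpha}$; there is no reason these survive multiplication of the weight by $e^{tb}$ for a general BMO function $b$ (which, unlike a bounded perturbation, can itself behave like $|x|^{\pm c}$ at infinity, e.g.\ $b=A\log|x|$). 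Moreover, the relation $\lambda>\max\{(\alpha-1)/4,0\}$ couples the order of the means to the decay of the weight, so this is strictly finer information than membership of $e^{tb}w_\alpha$ in $A_2$. Your fallback suggestion --- a maximal estimate valid across a whole $A_2$-type class --- is actually provably false for small $\lambda$: by Rubio de Francia extrapolation it would yield unweighted $L^p$ boundedness of $\sup_R|S_R^\lambda(H)|$ for all $p$ near $2$, contradicting the necessary conditions of Thangavelu and Lee--Ryu cited in the introduction. So as written the argument has a genuine gap at its central step.

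For comparison, the paper never conjugates: it uses the subordination formula to write $[b,S_R^\lambda(H)]$ as an average of $[b,S_t^\rho(H)]$, decomposes $x_+^\rho=\sum_k 2^{-k\rho}\phi(2^k x)$, and reduces to the weighted square-function bound of Proposition~\ref{Pro1} for the commutators $[b,\phi(\delta^{-1}(1-H/t^2))]$ with a quantitative gain $B^\upsilon_{\alpha,n}(\delta)$ in $\delta=2^{-k}$ that is summable against $2^{-k\rho}$. The BMO function is handled directly there, by localizing onto cubes $Q_{\mathbf m}$ adapted to the finite speed of propagation and invoking John--Nirenberg on $b-b_{\widetilde Q_{\mathbf m}}$, together with the commutator square-function estimate \eqref{ee6}. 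If you want to salvage your approach, you would need to prove the perturbed estimate by reworking the CDHLY argument for the weight $e^{tb}w_\alpha$, which in effect forces you back to the same localization-plus-John--Nirenberg analysis the paper carries out on the commutator itself.
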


As a consequence of Theorem~\ref{thm1.1}, we have the following result.

\begin{corollary}\label{coro1.2}
Let $2\leq p<\infty$ and $\lambda>\lambda(p)/2.$  Then for any $b\in\mathrm{BMO}(\mathbb{R}^n)$ and  $f\in L^p(\mathbb{R}^n)$,
$$\lim_{R\rightarrow\infty}[b,S^{\lambda}_R(H)]f(x)=0
$$
 almost everywhere.
\end{corollary}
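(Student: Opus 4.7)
The plan is to deduce Corollary~\ref{coro1.2} from Theorem~\ref{thm1.1} by the standard maximal-operator route: weighted $L^2$-control of the maximal commutator, plus pointwise convergence on a dense subclass, plus an approximation argument.

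Fix $p\in[2,\nf)$ and $\lambda>\lambda(p)/2$. Since $\lambda(p)/2=\max\{(n(1-2/p)-1)/4,\,0\}$ for $p\ge 2$, the hypothesis on $\lambda$ is exactly the statement that the interval
$$\bigl(\max\{0,n(1-2/p)\},\;\min\{n,4\lambda+1\}\bigr)$$
is nonempty, so I can pick $\alpha$ in it (taking simply $\alpha=0$ when $p=2$). For such $\alpha$, Theorem~\ref{thm1.1} applies, and H\"older's inequality with exponents $p/2$ and $p/(p-2)$ (the endpoint $p/(p-2)=\nf$ when $p=2$) combined with $\alpha p/(p-2)>n$ yields $\int_{\rn}|f(x)|^2(1+|x|)^{-\alpha}\wrt x\le C\|f\|_{L^p}^2$. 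Together with \eqref{e1} this gives the maximal bound
$$\Bigl\|\sup_{R>0}\bigl|[b,S_R^\lambda(H)]f\bigr|\Bigr\|_{L^2((1+|x|)^{-\alpha}\wrt x)}\le C\|b\|_{\mathrm{BMO}}\|f\|_{L^p}\qquad(f\in L^p).$$

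Next I show the convergence on the dense subclass $\mathcal{S}(\rn)\subset L^p(\rn)$. For $g\in\mathcal{S}$ the Hermite coefficients $\langle g,\Phi_{\mu}\rangle$ decay faster than any polynomial (a routine consequence of $H^k g\in L^2$ for every $k$), so dominated convergence in the spectral sum \eqref{e1.4} shows $S_R^\lambda(H)g\to g$ uniformly as $R\to\nf$. Because $b\in\mathrm{BMO}$ has at most logarithmic local growth by John-Nirenberg while $g$ is rapidly decreasing, $bg\in L^2(\rn)$, and the Chen-Duong-He-Lee-Yan a.e. convergence result recalled before Theorem~\ref{thm1.1} (applied with $p=2$ and $\lambda>\lambda(2)/2=0$) gives $S_R^\lambda(H)(bg)\to bg$ a.e. Hence
$$[b,S_R^\lambda(H)]g(x)=b(x)S_R^\lambda(H)g(x)-S_R^\lambda(H)(bg)(x)\longrightarrow 0\qquad\text{a.e.}$$

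A routine truncation completes the argument. Given $f\in L^p$ and $\ve>0$, decompose $f=g+h$ with $g\in\mathcal{S}(\rn)$ and $\|h\|_{L^p}<\ve$. The previous paragraph and the triangle inequality yield
$$\limsup_{R\to\nf}\bigl|[b,S_R^\lambda(H)]f(x)\bigr|\le\sup_{R>0}\bigl|[b,S_R^\lambda(H)]h(x)\bigr|\quad\text{a.e.},$$
and the maximal bound from the second paragraph controls the $L^2((1+|x|)^{-\alpha}\wrt x)$-norm of the right-hand side by $C\|b\|_{\mathrm{BMO}}\ve$. Letting $\ve\to 0$ forces the limsup to vanish a.e., which is the desired conclusion. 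The only substantive input beyond Theorem~\ref{thm1.1} is the a.e. convergence $S_R^\lambda(H)(bg)\to bg$ for Schwartz $g$, supplied by~\cite{CDHLY}; the delicate point in the bookkeeping is that the admissible range $\lambda>(\alpha-1)/4$ of Theorem~\ref{thm1.1} meets the H\"older integrability requirement $\alpha>n(1-2/p)$ precisely at the threshold $\lambda=\lambda(p)/2$, which is why that exponent is sharp in the corollary.
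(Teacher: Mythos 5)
Your argument is correct and follows essentially the same route as the paper: transfer of Theorem~\ref{thm1.1} to $L^p$ via the embedding $L^p(\mathbb{R}^n)\subseteq L^2(\mathbb{R}^n,(1+|x|)^{-\alpha})$ for $n(1-2/p)<\alpha<\min\{n,4\lambda+1\}$, a.e.\ convergence on a dense class using the result of \cite{CDHLY}, and the standard maximal-function approximation step. The only cosmetic difference is that the paper works with $f\in C_c^\infty$ and writes the commutator as $(b-b_B)S_R^\lambda(H)f-S_R^\lambda(H)((b-b_B)f)$ to get an $L^2$ input, whereas you use Schwartz functions and check $bg\in L^2$ directly (your phrase ``logarithmic local growth'' is imprecise pointwise, but the needed bound $\int_{B_R}|b|^2\lesssim R^n\log^2R$ does follow from John--Nirenberg, so the conclusion stands).
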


We would like to mention that the classical Bochner-Riesz means on $\RR^n$ is defined by
 \begin{eqnarray*}
 	\widehat{S^{\lambda}_Rf}(\xi)
 	=\left(1-{|\xi|^2\over R^2}\right)_+^{\lambda} \widehat{f}(\xi),  \quad \forall{\xi \in \RN}.
 \end{eqnarray*}
 Hu and Lu \cite{HL2} showed that for    $\lambda>0$,  the maximal commutator operator $\sup_{R>0}|[b,S^\lambda_R]f|$ is bounded on $L^2(\RR^n)$.
  Further they proved
  a weighted estimate that  the maximal commutator operator $\sup_{R>0}|[b,S^\lambda_R]f|$ is bounded on $L^2(\RR^n,|x|^{-\alpha})$ whenever $0<\alpha<n$ and $\lambda>\max\{(\alpha-1)/2,0\}$ in \cite{HL3}.

  The proof of Theorem~\ref{thm1.1} relies on a weighted $L^2$ estimate for the square function $  G_{b,\delta}$ which is defined by
  $$
  G_{b,\delta}(f)(x):=\left(\int_{0}^{\infty}\left|\, \left[b,\phi\left(\delta^{-1}\left( 1- {H\over t^2}\right) \right) \right] f(x)\right |^2\frac{\mathrm{d}t}{t}\right)^{\frac12},
  $$
 where  $\phi\in C_c^\infty(\RR)$ with support $\{x:1/8\leq |x| \leq 1/2\}$ and  $|\phi|\leq 1.$
 (see Proposition~\ref{Pro1} below). Indeed, we will show that  for any $0<\upsilon\leq1/2$, there exists a constant $C_{\alpha,\upsilon}>0$  independent of $\delta$ such that
\begin{align}\label{e02}
	\int_{\mathbb{R}^n}|G_{b,\delta}(f)(x)|^2(1+|x|)^{-\alpha}\mathrm{d}x\leq C_{\alpha,\upsilon} \|b\|_{\mathrm{BMO}}^2B^{\upsilon}_{\alpha,n}(\delta)
	\int_{\mathbb{R}^n}|f(x)|^2(1+|x|)^{-\alpha}\mathrm{d}x,
\end{align}
where
\begin{align}\label{BBB}
	B^{\upsilon}_{\alpha,n}(\delta)=
	\begin{cases}
		\delta^{1-\upsilon}, & \mbox{\rm{if} } 0< \alpha< 1, n=1; \alpha=0, n\geq 1;\\[4pt]
		\delta^{\frac{3-\alpha}{2}-\upsilon}, & \mbox{\rm{if} } 1<\alpha <n, n\geq2.
	\end{cases}
\end{align}
 To show \eqref{e02},   we will use an extension of two non-trivial facts due to \cite{CDHLY}. The first   is that for  any  $\alpha\geq0$,
 \begin{align}\label{a3a}
 	\|(1+|x|)^{2\alpha}f\|_{L^2(\RR^n)}\leq C\|(I+H)^{\alpha}f\|_{L^2(\RR^n)}
 \end{align}
 holds for any $f\in {\mathscr S} (\RR^n)$.  The second fact  is
  a type of trace lemma for the Hermite operator, that is, for $\alpha>1$, there exists a constant $C>0$ such that
\begin{align}\label{a1a}
\|\chi_{[k,k+1)}(H)\|_{L^2(\RR^n)\rightarrow L^2(\RR^n,(1+|x|)^{-\alpha})}\leq Ck^{-1/4},\ \ k\in \NN^+.
\end{align}
We would like to mention that when $0<\alpha<1$, \eqref{a1a} is not applicable. To show the square function estimate~\eqref{e02} for  $0<\alpha<1$, we make use of a weighted Plancherel-type estimate(see the estimate~\eqref{e4} below and refer to \cite[Lemma~2.6]{CDHLY} for the proof).

This paper is organized as follows. In Section~$2$, we give some preliminary results about Hermite operator,  and some estimates of the commutator of spectral multipliers and BMO functions, which provide basic estimates required
for the proof of  Theorem~\ref{thm1.1}.
 We establish a weighted estimate~\eqref{e02} of the  square function $G_{b,\delta}$ in Section~$3$. The proof of Theorem~\ref{thm1.1} will be given in Section~$4$   by using the estimate~\eqref{e02} of the square function $G_{b,\delta}$.  As a consequence of Theorem~\ref{thm1.1}, we obtain the proof of Corollary~\ref{coro1.2} at the end of Section~$4$.

\medskip


\section{  Preliminary results }
\setcounter{equation}{0}

We start by recalling some properties of the Hermite operator $H$. The Hermite operator $H$ satisfies finite speed propagation property, i.e.,
\begin{align}\label{FS}
\mathrm{supp}\  K_{\cos(t\sqrt{H})}(x,y)\subseteq \mathfrak{D}_t:=\{(x,y):|x-y|\leq t\}.
\tag{\rm{FS}}
\end{align}
 See for example, \cite[Theorem 2]{S}. By Fourier inversion, for any  even function  $F$,
$$F(\sqrt{H})=\frac{1}{2\pi}\int_{-\infty}^{+\infty}\widehat{F}(t)\cos(t\sqrt{H})\mathrm{d}t.$$
From \cite[Lemma I.1]{COSY},  it tells us that if    supp$\widehat{F}\subseteq [-t,t]$, then
\begin{align}\label{d3}
K_{F(\sqrt{H})}(x,y)\subseteq \mathfrak{D}_t,
\end{align}
which will be used in the sequel.

For any function $F$ with support in $[-1,1]$ and $2\leq p<\infty$,  we define
\begin{align*}
  \|F\|_{N, _p}:=\left(\frac{1}{N}\sum_{i=-N+1}^{N}\sup_{\lambda\in\left[\frac{i-1}{N},\frac{i}{N}\right)}|F(\lambda)|^p\right)^{\frac{1}{p}},\ \ N\in \NN^+.
\end{align*}

The following is the trace lemma for Hermite operator.
\begin{lemma}\label{L1}
For $\alpha>1$, there exists a constant $C>0$ such that for any $k\in \NN^+$,
\begin{align}\label{e004}
\|\chi_{[k,k+1)}(H)\|_{L^2(\RR^n)\rightarrow L^2(\RR^n,(1+|x|)^{-\alpha})}\leq Ck^{-1/4}.
\end{align}
As a consequence,  for any function $F$ supported in $[N/4,N]$, $N\in\mathbb{N^+}$ and any $\varepsilon>0$, there exist constant $C$ and $C_\varepsilon$ such that
\begin{align}\label{e4}
\int_{\mathbb{R}^n}|F(\sqrt{H})f(x)|^2(1+|x|)^{-\alpha}\mathrm{d}x\leq
\begin{cases}
 \  CN\|F(N\cdot)\|_{N^2, _2}^2\|f\|_{L^2(\RR^n)}^2, & \mbox{\rm{if}  \  } \alpha>1; \\[4pt]
 \  C_{\varepsilon}N^{\frac{\alpha}{1+\varepsilon}}\|F(N\cdot)\|_{N^2,_{2(1+\varepsilon)/\alpha}}^2\|f\|_{L^2(\RR^n)}^2,
  & \mbox{\rm{if}  \  } 0<\alpha\leq1.
\end{cases}
\end{align}
\end{lemma}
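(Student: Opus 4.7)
The plan has three stages matching the structure of \eqref{e4}. \textbf{Stage 1} (the projection bound \eqref{e004}): since the spectrum of $H$ is $\{2m+n:m\in\mathbb N_0\}$ and has gap $2$, each window $[k,k+1)$ contains at most one eigenvalue, so $\chi_{[k,k+1)}(H)$ is either $0$ or a Hermite projector $P_m$ with $2m+n\in[k,k+1)$. The desired inequality therefore reduces to
$$
\|P_m\|_{L^2\to L^2(\mathbb R^n,(1+|x|)^{-\alpha})}\le C(2m+n)^{-1/4},\qquad \alpha>1.
$$
I would follow \cite{CDHLY}: the range of $P_m$ is spanned by $\{\Phi_\mu:2|\mu|+n=m\}$, whose mass concentrates on the classical region $|x|\lesssim\sqrt m$. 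Reducing the weighted norm to a trace against $(1+|x|)^{-\alpha}$ and using sharp pointwise bounds on the spectral kernel $\sum_{2|\mu|+n=m}|\Phi_\mu(x)|^2$ together with integrability of $(1+|x|)^{-\alpha}$ for $\alpha>1$ delivers the $m^{-1/4}$ gain.

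\textbf{Stage 2} (deducing the first line of \eqref{e4}): for $F$ supported in $[N/4,N]$, decompose
$$
F(\sqrt H)f=\sum_{k}F(\sqrt{2m_k+n})\,P_{m_k}f,
$$
where $k$ runs over those indices for which the unique eigenvalue $2m_k+n\in[k,k+1)$ satisfies $\sqrt{2m_k+n}\in[N/4,N]$, so $k\sim N^2$ and there are $\lesssim N^2$ terms. The triangle inequality in $L^2((1+|x|)^{-\alpha})$, Stage 1 in the form $\|P_{m_k}g\|_{L^2((1+|x|)^{-\alpha})}\le Ck^{-1/4}\|P_{m_k}g\|_{L^2}$ (valid since $P_{m_k}^2=P_{m_k}$), and Cauchy--Schwarz give
$$
\int_{\mathbb R^n}|F(\sqrt H)f(x)|^2(1+|x|)^{-\alpha}\,\mathrm dx \le C\Big(\sum_k|F(\sqrt{2m_k+n})|^2 k^{-1/2}\Big)\sum_k\|P_{m_k}f\|_{L^2}^2,
$$
and the second factor is $\le\|f\|_{L^2}^2$ by orthogonality of $\{P_{m_k}\}$. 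Since $k^{-1/2}\sim N^{-1}$ on the support and consecutive frequencies $\sqrt{2m+n}$ near $N$ are spaced by $\sim 1/N$, each length-$1/N$ subinterval underlying the definition of $\|F(N\cdot)\|_{N^2,2}$ contains $O(1)$ such frequencies, whence the first factor is $\lesssim N\,\|F(N\cdot)\|_{N^2,2}^2$. This produces the first case of \eqref{e4}.

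\textbf{Stage 3} (the range $0<\alpha\le 1$): here \eqref{e004} is not available, since the would-be bound fails to absorb the summation over $\sim N^2$ frequencies. I would instead import the weighted Plancherel-type estimate of \cite[Lemma~2.6]{CDHLY}, whose proof interpolates between the trivial $\|F(\sqrt H)\|_{L^2\to L^2}\le\|F\|_\infty$ and Karadzhov's $L^2\to L^p$ spectral projection bound for $H$; the parameter $\varepsilon$ is the interpolation angle, producing the $\ell^{2(1+\varepsilon)/\alpha}$-type discrete norm on $F(N\cdot)$ and the corresponding power $N^{\alpha/(1+\varepsilon)}$.

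The main obstacle is Stage~1: the $m^{-1/4}$ gain is sharp and leans on the precise oscillation-plus-decay profile of Hermite functions near the turning points $|x|\sim\sqrt m$; once this trace estimate is in hand, Stage 2 is routine orthogonality plus Cauchy--Schwarz plus a frequency-counting estimate, and Stage 3 is a direct citation.
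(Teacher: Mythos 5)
Your proposal is correct and follows essentially the same route as the paper, which simply cites \cite[Lemmas~1.5, 2.4, 2.6]{CDHLY} for the trace estimate \eqref{e004} and for \eqref{e4}; your Stage~2 fleshes out, via orthogonality, Cauchy--Schwarz and the $1/N$ spacing of the frequencies $\sqrt{2m+n}$ near $N$, the equivalence between \eqref{e004} and the $\alpha>1$ case of \eqref{e4} that the paper only asserts. The one small discrepancy is in Stage~3: the paper obtains the $0<\alpha\le 1$ case by bilinear interpolation between the weighted $\alpha>1$ estimate and the trivial unweighted $L^2$ bound, rather than against Karadzhov's $L^2$--$L^p$ projection bound, but since both you and the paper ultimately defer to \cite[Lemma~2.6]{CDHLY} this is immaterial.
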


\begin{proof}
For the proof of \eqref{e004} and \eqref{e4}, we refer the reader to \cite[Lemmas~1.5, 2.4 and  2.6]{CDHLY}.
 We would like to mention that estimate \eqref{e4} for $\alpha>1$ is equivalent to  estimate~\eqref{e004}.  Estimate \eqref{e4} for $0<\alpha\leq1$ is a consequence of a bilinear interpolation of estimate~\eqref{e4} for $\alpha>1$ and the trivial fact $\|F(\sqrt{H})f\|_{L^2(\RR^n)}=\|F(N\cdot)\|_{L^{\infty}(\RR)} \|f\|_{L^2(\RR^n)}$.
\end{proof}

\begin{lemma}\label{L2}
	Let $\alpha\geq0$.  Then the estimate
	\begin{align*}
		\|(1+|x|)^{\alpha/2}f\|_{L^2(\RR^n)}\leq C\|(I+H)^{\alpha/4}f\|_{L^2(\RR^n)},
	\end{align*}
	holds for any $f\in {\mathscr S}({\mathbb R^n})$. Here,
	${\mathscr S}({\mathbb R^n})$ stands for  the class of Schwartz functions in ${\mathbb R^n}.$
\end{lemma}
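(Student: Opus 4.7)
The plan is to prove that the operator $T_\alpha := (1+|x|)^{\alpha/2}(I+H)^{-\alpha/4}$ is bounded on $L^2(\rn)$ for every $\alpha \geq 0$. The strategy has three stages: the anchor case $\alpha = 2$ by direct integration by parts, the range $\alpha \in [0,2]$ by Stein analytic interpolation, and the range $\alpha > 2$ by an iteration on integer exponents combined with one more round of interpolation.

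For the anchor case $\alpha = 2$, I would use the Dirichlet form identity. For $f \in \mathscr{S}(\rn)$, integration by parts gives
$$\langle Hf, f\rangle = \int_{\rn}\bigl(|\nabla f|^2 + |x|^2|f|^2\bigr)\,\mathrm{d}x \geq \||x|f\|_{L^2(\rn)}^2.$$
Combined with the pointwise bound $(1+|x|)^2 \leq 2(1+|x|^2)$ and $\|f\|_{L^2(\rn)}^2 \leq \|(I+H)^{1/2}f\|_{L^2(\rn)}^2$, this yields
$$\|(1+|x|)f\|_{L^2(\rn)}^2 \leq 2\|f\|_{L^2(\rn)}^2 + 2\||x|f\|_{L^2(\rn)}^2 \leq 2\|(I+H)^{1/2}f\|_{L^2(\rn)}^2,$$
which is exactly the $\alpha=2$ case.

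For $\alpha \in (0,2)$, I would apply Stein analytic interpolation to the family $T_z := (1+|x|)^z(I+H)^{-z/2}$ on the strip $0 \leq \Re z \leq 1$. On $\Re z = 0$ both factors are unitaries on $L^2$ (the first because $|(1+|x|)^{it}| \equiv 1$, the second by spectral calculus on $I+H \geq 1$), while on $\Re z = 1$ the anchor estimate gives a bound uniform in $\Im z$ after absorbing the unit-modulus factors $(1+|x|)^{i\Im z}$ and $(I+H)^{-i\Im z/2}$. For $\alpha > 2$, I would iterate on integer exponents $k$ to obtain $\|(1+|x|)^k f\|_{L^2(\rn)} \leq C_k \|(I+H)^{k/2}f\|_{L^2(\rn)}$; expanding $(1+|x|)^{2k}$ in powers of $|x|$ reduces the step to controlling $\||x|^j f\|_{L^2(\rn)}$, which is handled via $|x|^2 f = Hf + \Delta f$, the companion identity $-\Delta = H - |x|^2$, and the commutator $[H, x_j] = -2\partial_j$. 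A final Stein interpolation between consecutive integers fills in all fractional $\alpha > 2$.

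The main obstacle is keeping the integer-exponent induction from becoming circular: naively bounding $\||x|^{k+1}f\|_{L^2(\rn)}$ in terms of $\|(I+H)^{(k+1)/2}f\|_{L^2(\rn)}$ produces estimates of the form $\||x|^{k+1}f\| \leq \cdots + \||x|^{k+1}f\|$ because $|x|$ and $H$ do not commute. The remedy is to strengthen the induction hypothesis so that $\|(I+H)^{k/2}f\|_{L^2(\rn)}$ simultaneously controls the full Hermite--Sobolev norm $\sum_{|\beta|+|\gamma|\leq k}\|x^\beta \partial^\gamma f\|_{L^2(\rn)}$. This is exactly the statement that $H = -\Delta + |x|^2$ is elliptic of order $2$ when $x$ and $\partial$ are weighted equally, and it is what closes each step cleanly.
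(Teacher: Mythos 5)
The paper offers no proof of this lemma at all --- it simply cites \cite[Lemma~1.4]{CDHLY} --- so your self-contained argument is necessarily a different route, and it is a correct one. The anchor case $\alpha=2$ via the Dirichlet form $\langle Hf,f\rangle=\int_{\mathbb{R}^n}(|\nabla f|^2+|x|^2|f|^2)\,\mathrm{d}x\geq \||x|f\|_{L^2}^2$ is right, and the Stein interpolation applied to $T_z=(1+|x|)^{z}(I+H)^{-z/2}$ is legitimate: both boundary families are bounded uniformly in $\Im z$ because $(1+|x|)^{i\tau}$ is unimodular and $(I+H)^{-i\tau/2}$ is unitary by the spectral theorem (note $H\geq n\geq 1$), and pairing against compactly supported functions gives the required analyticity and admissible growth. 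The only place where substantial work is concentrated is the integer-exponent step for $\alpha>2$, and you have correctly diagnosed both the danger (circularity caused by the non-commutativity of $|x|$ and $H$) and the cure (strengthening the induction to the full Hermite--Sobolev bound $\sum_{|\beta|+|\gamma|\leq k}\|x^{\beta}\partial^{\gamma}f\|_{L^2}\leq C_k\|(I+H)^{k/2}f\|_{L^2}$). If you write this out, the cleanest implementation is via the creation and annihilation operators $A_j=\partial_j+x_j$ and $A_j^{*}=-\partial_j+x_j$: since $\sum_j(A_j^{*}A_j+A_jA_j^{*})=2H$ one gets $\sum_j\big(\|A_jg\|_{L^2}^2+\|A_j^{*}g\|_{L^2}^2\big)=2\langle Hg,g\rangle$, and the exact commutation relations $[H,A_j]=-2A_j$, $[H,A_j^{*}]=2A_j^{*}$ allow powers of $(I+H)$ to be moved past each factor with only a shift in the spectral parameter; this closes the induction for every integer $k$, including odd $k$ where $(I+H)^{k/2}$ is not a differential operator and a purely ``elliptic regularity'' argument would need extra care. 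With that fact in hand, your final interpolation between consecutive integers completes the proof, so the proposal stands as a valid alternative to the citation the paper relies on.
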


 \begin{proof}
For the proof, we refer the reader to \cite[Lemma 1.4]{CDHLY}.
\end{proof}

\begin{lemma}\label{L3}
Let $b\in\mathrm{BMO}(\mathbb{R}^n)$. Denote $M_b$ the commutator of the Hardy-Littlewood maximal operator defined by
  \begin{align*}
    M_b(f)(x):=\sup_{r>0}r^{-n}\int_{|x-y|<r}|(b(x)-b(y))f(y)| \mathrm{d}y.
  \end{align*}
If $1<p<\infty$ and $w\in A_p$, then $M_b$ is bounded on $L^p(\mathbb{R}^n,w)$ with bound $C(n,p)\|b\|_{\mathrm{BMO}}$.
\end{lemma}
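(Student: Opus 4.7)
My plan is to use the Fefferman--Stein sharp maximal function technique. I would first establish the pointwise estimate
\[
M^{\#}(M_b f)(x) \leq C_s \, \|b\|_{\mathrm{BMO}} \bigl( M_s f(x) + M_s(Mf)(x) \bigr)
\]
valid for any $s > 1$, where $M_s g := (M|g|^s)^{1/s}$ and $M^{\#}$ denotes the Fefferman--Stein sharp maximal function. Given this, the conclusion follows from three standard ingredients: (i) the Fefferman--Stein inequality $\|g\|_{L^p(w)} \leq C \|M^{\#} g\|_{L^p(w)}$ valid for $w \in A_\infty$ (applied to $g = M_b f$ after truncating $f$ to ensure the left-hand side is a priori finite); (ii) Muckenhoupt's theorem, which gives $\|M\|_{L^p(w) \to L^p(w)} \leq C$ for $w \in A_p$; and (iii) the openness of $A_p$, which allows choosing $s > 1$ small enough that $w \in A_{p/s}$, so that both $M_s$ and $M_s \circ M$ are bounded on $L^p(w)$.

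To prove the sharp function bound, fix a cube $Q \ni x$ and decompose $f = f_1 + f_2$ with $f_1 = f \chi_{2Q}$. The starting observation is the pointwise inequality
\[
M_b g(y) \leq |b(y) - b_Q| \, Mg(y) + M\bigl( (b - b_Q) g \bigr)(y),
\]
obtained by inserting $b(y) - b(z) = (b(y) - b_Q) + (b_Q - b(z))$ in the definition of $M_b g(y)$. Applying this with $g = f_1$ and averaging over $Q$: the first term is handled by Hölder with exponents $s'$ and $s$ followed by the John--Nirenberg inequality, giving a bound $\lesssim \|b\|_{\mathrm{BMO}} M_s f(x)$; the second term is handled by Hölder with an exponent slightly larger than $1$, together with the $L^t$ boundedness of $M$ and a further application of John--Nirenberg, yielding the same bound.

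For the tail $M_b f_2$, I would compare $M_b f_2(y)$ with $M_b f_2(x_Q)$ for $y \in Q$, where $x_Q$ is the center of $Q$. Since $f_2$ is supported outside $2Q$, only radii $r \gtrsim \ell(Q)$ contribute to the supremum defining $M_b f_2(y)$, so the balls $B(y,r)$ and $B(x_Q, r)$ are comparable up to bounded distortion. Exploiting this, one obtains an oscillation bound
\[
|M_b f_2(y) - M_b f_2(x_Q)| \leq C \|b\|_{\mathrm{BMO}} \, M_s(Mf)(x), \qquad y \in Q,
\]
so taking $c_Q := M_b f_2(x_Q)$ in the definition of $M^{\#}$ closes the argument. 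The main obstacle is precisely this oscillation estimate: a naive triangle-inequality attempt produces the term $|b(y) - b(x_Q)|$ which has no pointwise bound, and the fix is to regroup the differences so that only averages of $b$ enter, allowing the BMO norm to provide uniform control.
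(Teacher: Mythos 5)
The paper offers no proof of this lemma at all --- it simply cites \cite[Lemma~1]{HL3} and \cite{GHST} --- so your attempt is measured against the standard literature rather than an argument in the text. Your skeleton (a Fefferman--Stein bound $M^{\#}(M_bf)\lesssim\|b\|_{\mathrm{BMO}}(M_sf+M_s(Mf))$, then the Fefferman--Stein inequality, Muckenhoupt's theorem and the openness of $A_p$) is the standard and correct strategy, and your treatment of the local piece $M_bf_1$ is sound. But essentially all of the difficulty of the lemma is concentrated in the tail oscillation estimate, which you state without proof, and the one-sentence fix you offer does not address everything that goes wrong there.

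Two concrete problems. First, the bound $|M_bf_2(y)-M_bf_2(x_Q)|\le C\|b\|_{\mathrm{BMO}}M_s(Mf)(x)$ cannot hold pointwise in $y\in Q$: any splitting of $b(y)-b(z)$ leaves behind a term of the form $|b(y)-b_Q|\,Mf_2(x_Q)$, and $|b(y)-b_Q|$ admits no pointwise bound; the estimate is only true after averaging in $y$ over $Q$ (which is all $M^{\#}$ requires, but the statement must be reformulated accordingly). Second, and more seriously, comparing $r^{-n}\int_{B(y,r)}$ with an average centered at $x_Q$ forces you to enlarge the radius to $r'=r+c\,\ell(Q)$ and hence introduces the factor $(r'/r)^n$. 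For $r\sim\ell(Q)$ this is a constant $C_n>1$, so a crude engulfing argument only gives $M_bf_2(y)\le C_nM_bf_2(x_Q)+\cdots$, and the leftover $(C_n-1)M_bf_2(x_Q)$ is \emph{not} controlled by $\|b\|_{\mathrm{BMO}}M_s(Mf)(x)$: with $b(z)=\log\bigl(\ell(Q)+|z-x_Q|\bigr)$ (BMO norm $O(1)$ uniformly in $\ell(Q)$) and $f$ the indicator of an annulus at distance $D\gg\ell(Q)$ from $Q$, one has $M_bf_2(x_Q)\approx\log(D/\ell(Q))$ while $M_s(Mf)(x)\approx 1$. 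To close the argument one must work radius by radius: for $r\gg\ell(Q)$ the geometric factor is $1+O(\ell(Q)/r)$ and this gain must be played off against the logarithmic growth $|b_Q-b_{B(x_Q,r)}|\le C\|b\|_{\mathrm{BMO}}\log(e+r/\ell(Q))$ incurred when replacing $b_Q$ by the average over the large ball, whereas for $r\sim\ell(Q)$ there is no logarithm and the constant $C_n$ is harmless. None of this is in your sketch, and ``regroup the differences so that only averages of $b$ enter'' does not by itself produce it. (A minor further point: the a priori finiteness needed to apply the Fefferman--Stein inequality to $M_bf$ should be verified, e.g.\ by first proving the estimate for truncated weights or truncated $b$.)
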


 \begin{proof}
For the proof, we refer the reader to \cite[Lemma~1]{HL3} and \cite{GHST}.
\end{proof}

\begin{lemma}\label{L4}
   Let $M_b(f)$ be defined as above and $\varphi\in C_c^{\infty}({\mathbb{R}})$. Then for any $\varepsilon>0$,
   \begin{align}\label{ee1}
     \sup_{t>0}|\,[b,\varphi(t^{-2}H)]f(x)|\leq C_{\varepsilon}\|\varphi\|_{W^2_{n+1/2+\varepsilon}} M_b(f)(x).
   \end{align}
   In addition, for any $1<p<\infty$ and $w\in A_p$,
   \begin{align}\label{ee2}
   \|\sup_{t>0}|\,[b,\varphi(t^{-2}H)]f|\,\|_{L^p(\mathbb{R}^n,w)}\leq C_{\varepsilon}\|\varphi\|_{W^2_{n+1/2+\varepsilon}} \|f\|_{L^p(\mathbb{R}^n,w)}.
   \end{align}

\end{lemma}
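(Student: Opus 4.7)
The plan is to prove the pointwise bound~\eqref{ee1} first and then deduce the weighted estimate~\eqref{ee2} from it by a direct appeal to Lemma~\ref{L3}. The key input is a pointwise size bound on the kernel of $\varphi(t^{-2}H)$: using the finite speed propagation property~\eqref{FS} together with the Fourier representation $F(\sqrt{H})=\frac{1}{2\pi}\int\widehat{F}(\xi)\cos(\xi\sqrt{H})\,d\xi$ for even $F$, and a standard Sikora-type weighted Plancherel argument applied to $F(s)=\varphi(t^{-2}s^{2})$ combined with a Cauchy--Schwarz splitting $\varphi=\varphi_{1}\varphi_{2}$, I would establish
\begin{equation*}
|K_{\varphi(t^{-2}H)}(x,y)|\leq C_{\varepsilon}\,\|\varphi\|_{W^{2}_{n+1/2+\varepsilon}}\,t^{-n}\bigl(1+t^{-1}|x-y|\bigr)^{-(n+1/2+\varepsilon)},
\end{equation*}
uniformly in $t>0$. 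The shift of $1/2+\varepsilon$ beyond the Sikora threshold $n/2$ is precisely what is needed to sum dyadic annuli in the next step.

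With the kernel estimate in hand, writing
\begin{equation*}
[b,\varphi(t^{-2}H)]f(x)=\int_{\mathbb{R}^{n}}K_{\varphi(t^{-2}H)}(x,y)\bigl(b(x)-b(y)\bigr)f(y)\,dy,
\end{equation*}
I would decompose $\mathbb{R}^{n}$ into the ball $B(x,t)$ and the dyadic annuli $A_{k}=\{y:2^{k-1}t\leq|x-y|<2^{k}t\}$ for $k\geq 1$. On each piece the kernel bound yields a prefactor $t^{-n}2^{-k(n+1/2+\varepsilon)}$, while the definition of $M_{b}$ gives
\begin{equation*}
\int_{|x-y|<2^{k}t}|b(x)-b(y)|\,|f(y)|\,dy\leq (2^{k}t)^{n}M_{b}(f)(x).
\end{equation*}
Multiplying and summing in $k$, the resulting geometric series $\sum_{k\geq 0}2^{-k(1/2+\varepsilon)}$ converges, giving
\begin{equation*}
|[b,\varphi(t^{-2}H)]f(x)|\leq C_{\varepsilon}\,\|\varphi\|_{W^{2}_{n+1/2+\varepsilon}}\,M_{b}(f)(x),
\end{equation*}
uniformly in $t$. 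Taking the supremum over $t>0$ establishes~\eqref{ee1}. Then~\eqref{ee2} follows at once by taking $L^{p}(\mathbb{R}^{n},w)$ norms and applying the weighted boundedness of $M_{b}$ from Lemma~\ref{L3}.

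The principal obstacle is the pointwise kernel estimate itself, since the standard weighted Plancherel bound is an $L^{2}$ statement rather than a pointwise one. I expect to bridge this gap by factoring $\varphi=\varphi_{1}\varphi_{2}$ with both factors supported on the same compact set as $\varphi$, writing $K_{\varphi(t^{-2}H)}(x,y)=\int K_{\varphi_{1}(t^{-2}H)}(x,z)\,K_{\varphi_{2}(t^{-2}H)}(z,y)\,dz$, and applying a weighted Cauchy--Schwarz inequality that transfers $L^{2}$ decay of each factor into the desired pointwise decay; this is a by-now standard manoeuvre for spectral multipliers of operators satisfying \eqref{FS}. Once the kernel bound is established, the remaining steps are routine dyadic decomposition and an invocation of Lemma~\ref{L3}.
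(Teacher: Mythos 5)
Your proposal is essentially correct and reaches \eqref{ee1} and \eqref{ee2}, but it takes a different route from the paper. You aim first at a pointwise kernel bound for $\varphi(t^{-2}H)$ with polynomial decay of order $n+1/2+\varepsilon$, obtained by a weighted-Plancherel/Cauchy--Schwarz factorization $\varphi=\varphi_1\varphi_2$, and then run a single dyadic decomposition against $M_b$. The paper never proves such a pointwise bound for $\varphi(t^{-2}H)$ itself: it writes $G(\lambda)=\varphi(\lambda)e^{\lambda}$, represents $\varphi(t^{-2}H)=\int_{\mathbb{R}}\widehat{G}(\tau)e^{-t^{-2}(1-i\tau)H}\,\mathrm{d}\tau$, and uses the off-the-shelf complex-time Gaussian bound \eqref{ee3_1} (Phragm\'en--Lindel\"of, \cite[Theorem~7.2]{Ou}); the dyadic decomposition against $M_b$ is then performed for each fixed $\tau$ at the level of the heat kernel $p_{z}$, producing a factor $(1+|\tau|^2)^{n/2}$ which is integrated against $|\widehat{G}(\tau)|$ --- this is exactly where the Sobolev exponent $n+1/2+\varepsilon$ enters. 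Your route is viable (the factorization trick for upgrading weighted $L^2$ kernel bounds to pointwise ones is standard for operators with Gaussian bounds), but it front-loads the hardest step into a sketch, whereas the paper's argument only needs a quotable heat-kernel estimate; on the other hand your kernel bound, once proved, is a reusable statement of independent interest.

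Two small corrections. First, the scaling in your claimed kernel bound is inverted: since $\varphi(t^{-2}H)$ localizes $\sqrt{H}\sim t$, the kernel lives at spatial scale $t^{-1}$ and the correct bound is
\begin{equation*}
|K_{\varphi(t^{-2}H)}(x,y)|\leq C_{\varepsilon}\,\|\varphi\|_{W^{2}_{n+1/2+\varepsilon}}\,t^{n}\bigl(1+t\,|x-y|\bigr)^{-(n+1/2+\varepsilon)},
\end{equation*}
with the annuli taken at radii $2^{k}t^{-1}$; the two errors cancel in your final sum (you still get $\sum_k 2^{-k(1/2+\varepsilon)}M_b(f)(x)$), so the conclusion is unaffected, but the intermediate inequality as written is false for large $t$. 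Second, the appeal to finite speed of propagation \eqref{FS} does no work here, since $\widehat{\varphi(t^{-2}(\cdot)^{2})}$ is not compactly supported; what your factorization argument actually uses is the Gaussian upper bound (or the weighted Plancherel estimates it implies), which is also the engine of the paper's proof. The deduction of \eqref{ee2} from \eqref{ee1} via Lemma~\ref{L3} is exactly as in the paper.
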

\begin{proof}
  Let $G(t^{-2}H)=\varphi(t^{-2}H)e^{t^{-2}H}$, then by the Fourier transform, we have
  $$\varphi(t^{-2}H)=\int_{\mathbb{R}} \widehat{G}(\tau) e^{-t^{-2}(1-i\tau)H}\mathrm{d}\tau,$$
  with the kernel
  $$K_{\varphi(t^{-2}H)}(x,y)=\int_{\mathbb{R}}\widehat{G}(\tau)p_{t^{-2}(1-i\tau)}(x,y)\mathrm{d}\tau,$$
  where $p_{t}(x,y)$ is the heat kernel of the semigroup $e^{-tH}$. Then
  \begin{align}\label{ee3}
    |[b,\varphi(t^{-2}H)]f(x)|&=|\int_{\mathbb{R}^n}K_{\varphi(t^{-2}H)}(x,y)(b(x)-b(y))f(y)\mathrm{d}y|\nonumber\\
   &\leq \int_{\mathbb{R}}|\widehat{G}(\tau)|\int_{\mathbb{R}^n}|p_{t^{-2}(1-i\tau)}(x,y)(b(x)-b(y))f(y)|\mathrm{d}y\mathrm{d}\tau.
  \end{align}
  The kernel of  $e^{-t^{-2}H}$ has the Gaussian upper bound. Let $z=t^{-2}(1-i\tau)$. By the Phragmen–Lindel\"of Theorem, the kernel of $e^{-zH}$ satisfies the following estimate(see \cite[Theorem~7.2]{Ou}),
\begin{align}\label{ee3_1}
  |p_{z}(x,y)|\leq
  C|z|^{-n/2}(1+|\tau|^2)^{n/4}\exp\left(-c\frac{|x-y|^2}{|z|(1+|\tau|^2)^{1/2}}\right).
\end{align}
  Let $r=t^{-1}(1+|\tau|^2)^{1/2}$, $U_0(B)=B(x,r)$, $U_k(B)=2^{k}B-2^{k-1}B$ for $k\geq1$. By estimate~\eqref{ee3_1},
  \begin{align}\label{ee4}
  \int_{\mathbb{R}^n}|p_{z}(x,y)(b(x)-b(y))f(y)|\mathrm{d}y
 &\leq C\sum_{k\geq 0}\frac{\exp(-c2^{2k})}{(1+|\tau|^2)^{-n/2}r^n}\int_{U_k(B)}|b(x)-b(y)||f(y)|\mathrm{d}y\nonumber \\
 &\leq  C\sum_{k\geq 0}\frac{2^{kn}\exp(-c2^{2k})}{(1+|\tau|^2)^{-n/2}|2^kB|}\int_{2^{k}B}|b(x)-b(y)||f(y)|\mathrm{d}y\nonumber \\
 &\leq  C\sum_{k\geq 0}\frac{2^{kn}\exp(-c2^{2k})}{(1+|\tau|^2)^{-n/2}} M_b(f)(x)\nonumber \\
 &\leq  C(1+|\tau|^2)^{n/2}M_b(f)(x).
  \end{align}
Note that  $\varphi\in\mathbb{R}$ has compact support, $\|\varphi\|_{W^2_{n+1/2+\varepsilon}}\approx\|G\|_{W^2_{n+1/2+\varepsilon}}
$. This, in combination with  estimates~\eqref{ee3} and \eqref{ee4},  implies that
  \begin{align}\label{ee7}
 |[b,\varphi(t^{-2}H)]f(x)|\leq\int_{\mathbb{R}}|\widehat{G}(\tau)|(1+|\tau|^2)^{n/2}M_b(f)(x)\mathrm{d}\tau\leq C_{\varepsilon}\|\varphi\|_{W^2_{n+1/2+\varepsilon}} M_b(f)(x).
  \end{align}
 Finally, the $L^p(\mathbb{R}^n,w)$ of $\sup_{t>0}|[b,\varphi(t^{-2}H)]f|$ follows by  estimate~\eqref{ee7} and Lemma~\ref{L3}.
\end{proof}

\begin{lemma}\label{lem55}
Let $b\in \mathrm{BMO}(\mathbb{R}^n)$, $s>n/2$ and let $r_0=\max\{1,n/s\}$.  Then for all   Borel function $F$  such that $\sup_{R>0}\|\eta F(R\cdot)\|_{W_s^{\infty}}<\infty$ where $\eta\in C_c^{\infty}(0,\infty)$ is a fixed function  and not identically zero,
the commutator $[b,F(\sqrt{H})]$ is bounded on $L^p(\RR^n,w)$ for all $r_0<p<\infty$ and  $w\in A_{p/{r_0}}$. In addition,
  \begin{align*}
\|[b,F(\sqrt{H})]\|_{L^p(\RR^n,w)\rightarrow L^p(\RR^n,w)}\leq C\|b\|_{\mathrm{BMO}}(\sup_{R>0}\|\eta F(R\cdot)\|_{W^\infty_s}+|F(0)|).
\end{align*}
\end{lemma}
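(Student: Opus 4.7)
The plan is to deduce Lemma~\ref{lem55} from a weighted Hörmander-type spectral multiplier theorem for $H$ combined with a general BMO-commutator extension argument. First, I would invoke (or, if necessary, prove) the weighted spectral multiplier theorem for the Hermite operator: under the hypothesis $\sup_{R>0}\|\eta F(R\cdot)\|_{W^\infty_s}<\infty$ with $s>n/2$, the operator $F(\sqrt H)$ is bounded on $L^p(\RR^n,w)$ for all $p>r_0$ and $w\in A_{p/r_0}$, with norm at most $C(\sup_R\|\eta F(R\cdot)\|_{W^\infty_s}+|F(0)|)$. This result follows from the Gaussian heat kernel bounds for $e^{-tH}$ (Mehler's formula) and the general framework for weighted Hörmander-type multiplier theorems for non-negative self-adjoint operators with heat kernel bounds; see, e.g., \cite{COSY} and the references therein. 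Morally, one decomposes $F=F(0)+\sum_{k\ge k_0}F_k$ along dyadic spectral annuli (using $\sqrt H\ge\sqrt n$), rescales each $F_k(\sqrt H)=\tilde G_k(2^{-2k}H)$ with $\tilde G_k$ supported in $(1/4,4)$ of uniformly controlled $W^\infty_s$-norm, and applies finite propagation speed~\eqref{d3} together with off-diagonal heat kernel bounds.

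Second, I would pass from this multiplier bound to the commutator bound via the Coifman-Rochberg-Weiss method, as extended by Álvarez-Bagby-Kurtz-Pérez~\cite{ABKP}: consider the analytic family $T_z:=e^{zb}F(\sqrt H)e^{-zb}$ for $z\in\CC$, and observe that $[b,F(\sqrt H)]=\frac{d}{dz}T_z\big|_{z=0}$. Using the Rubio-de-Francia extrapolation theorem, the uniform weighted $L^p$-boundedness of $F(\sqrt H)$ on $A_{p/r_0}$ translates into uniform bounds on $T_z$ for small $|z|$ (here one uses the John-Nirenberg inequality to guarantee that the weight $|e^{zb}|^p$ lies in a suitable $A_p$ class whenever $|z|$ is small in terms of $\|b\|_{\mathrm{BMO}}$). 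A Cauchy integral representation of the derivative then yields
\[
\|[b,F(\sqrt H)]\|_{L^p(w)\to L^p(w)}\leq C\|b\|_{\mathrm{BMO}}\bigl(\sup_{R>0}\|\eta F(R\cdot)\|_{W^\infty_s}+|F(0)|\bigr).
\]

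The main obstacle is establishing the weighted Hörmander-type spectral multiplier theorem for $H$ at the sharp threshold $s>n/2$ for the full $A_{p/r_0}$ weight class: one cannot merely iterate Lemma~\ref{L4}, which requires the much stronger smoothness $s>n+1/2$. Instead, a genuine Calderón-Zygmund decomposition adapted to the Hermite setting (exploiting the Gaussian heat kernel bounds and the finite propagation speed property~\eqref{FS}) is needed to bring the Sobolev exponent down to $n/2$. Once this multiplier theorem is in place, the commutator upgrade via extrapolation and Cauchy integration is essentially routine, and the $|F(0)|$ term in the final bound accounts for the constant piece of $F$ that the seminorm $\sup_R\|\eta F(R\cdot)\|_{W^\infty_s}$ does not control.
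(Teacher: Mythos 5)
Your outline is mathematically sound, but it amounts to reproving the theorem that the paper simply cites: the paper's proof of Lemma~\ref{lem55} is a one-line observation that the Hermite semigroup has Gaussian upper bounds, so the general weighted commutator theorem for spectral multipliers of Bui \cite[Theorem~1.1]{B} (see also \cite[Theorem~3.2]{DSY}) applies verbatim. The two ingredients you describe are precisely the content of those references: the weighted H\"ormander-type multiplier theorem at the threshold $s>n/2$ with $r_0=n/s$ and $w\in A_{p/r_0}$ is established in \cite{DSY} (proved, as you anticipate, by a Calder\'on--Zygmund-type argument built on the Gaussian bounds and finite propagation speed, not by iterating the crude kernel estimate behind Lemma~\ref{L4}), and the upgrade to commutators is the \'Alvarez--Bagby--Kurtz--P\'erez conjugation $T_z=e^{zb}F(\sqrt H)e^{-zb}$ of \cite{ABKP}, with a Cauchy integral over a circle of radius comparable to $\|b\|_{\mathrm{BMO}}^{-1}$, using John--Nirenberg and the openness of the $A_q$ classes to keep the perturbed weight $w\,e^{p\Re z\, b}$ in $A_{p/r_0}$ with uniformly controlled characteristic. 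The points to watch in your version are that the Cauchy-integral step requires the multiplier bound to depend only on the $A_{p/r_0}$ characteristic of the weight (this quantitative uniformity is what your appeal to extrapolation is really supplying), and that the hardest step of your plan, the sharp weighted multiplier theorem at $s>n/2$, is left at the level of ``invoke or, if necessary, prove''; since that theorem genuinely exists in the literature, the efficient write-up is the paper's, namely to cite \cite{B} and \cite{DSY} directly, while your sketch is essentially a faithful summary of how those cited results are themselves proved.
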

\begin{proof}
The Hermite operator has Gaussian upper bound, thus it satisfies the condition of \cite[Theorem~1.1]{B}, which gives this lemma. See also \cite[Theorem~3.2]{DSY}.

\end{proof}

\begin{lemma}\label{L6}
  Let a non-zero function $\varphi\in C_c^{\infty}(\mathbb{R})$ with support $\{u:1\leq |u|\leq 3\}$.  For  $-n<\alpha<n$,
   \begin{align}\label{ee5}
    \int_{\mathbb{R}^n} \sum_{k\in\mathbb{Z}}|\varphi(2^{-k}\sqrt{H})f(x)|^2 (1+|x|)^{-\alpha} \mathrm{d}x\leq
    C \int_{\mathbb{R}^n} |f(x)|^2(1+|x|)^{-\alpha} \mathrm{d}x,
      \end{align}
  and for any $b\in \mathrm{BMO}(\mathbb{R}^n)$,
  \begin{align}\label{ee6}
    \int_{\mathbb{R}^n} \sum_{k\in\mathbb{Z}}|[b,\varphi(2^{-k}\sqrt{H})]f(x)|^2 (1+|x|)^{-\alpha} \mathrm{d}x\leq
    C \|b\|_{\mathrm{BMO}}^2\int_{\mathbb{R}^n} |f(x)|^2(1+|x|)^{-\alpha} \mathrm{d}x.
  \end{align}
\end{lemma}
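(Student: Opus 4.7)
The plan is to reduce both \eqref{ee5} and \eqref{ee6} to weighted spectral multiplier theorems via the Rademacher--Khintchine device. First I would note that for $-n<\alpha<n$ the weight $w(x):=(1+|x|)^{-\alpha}$ belongs to the Muckenhoupt class $A_2(\mathbb{R}^n)$: it is comparable to $1$ near the origin and behaves like $|x|^{-\alpha}$ at infinity, and $|x|^{-\alpha}\in A_2$ precisely when $-n<\alpha<n$.

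Next, let $(r_k)_{k\in\mathbb{Z}}$ denote the Rademacher system on $[0,1]$ and set
$$F_s(\lambda):=\sum_{k\in\mathbb{Z}} r_k(s)\,\varphi(2^{-k}\lambda).$$
Because $\varphi$ is supported in $\{1\leq|u|\leq 3\}$, at each $\lambda\neq 0$ only $O(1)$ summands are nonzero, so $F_s\in C^\infty(\mathbb{R}\setminus\{0\})$ with $F_s(0)=0$, and for any fixed $\eta\in C_c^\infty(0,\infty)$ and any $\sigma>0$,
$$\sup_{R>0}\|\eta F_s(R\cdot)\|_{W^\infty_\sigma}\leq C_{\eta,\sigma}\|\varphi\|_{W^\infty_\sigma}$$
uniformly in $s\in[0,1]$. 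Applying Khintchine's identity for $p=2$ and Fubini's theorem, the left-hand sides of \eqref{ee5} and \eqref{ee6} become, respectively,
$$\int_0^1\!\!\int_{\mathbb{R}^n}|F_s(\sqrt{H})f(x)|^2 w(x)\,dx\,ds \quad\text{and}\quad \int_0^1\!\!\int_{\mathbb{R}^n}|[b,F_s(\sqrt{H})]f(x)|^2 w(x)\,dx\,ds,$$
so it suffices to bound $F_s(\sqrt{H})$ and $[b,F_s(\sqrt{H})]$ on $L^2(w)$ uniformly in $s$.

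For \eqref{ee6} I would invoke Lemma~\ref{lem55} with $p=2$ and some $\sigma>n$, which forces $r_0=\max\{1,n/\sigma\}=1$ and hence reduces the hypothesis to $w\in A_{p/r_0}=A_2$; this yields
$$\|[b,F_s(\sqrt{H})]\|_{L^2(w)\to L^2(w)}\leq C\|b\|_{\mathrm{BMO}}\bigl(\sup_{R>0}\|\eta F_s(R\cdot)\|_{W^\infty_\sigma}+|F_s(0)|\bigr)\leq C\|b\|_{\mathrm{BMO}}.$$
For \eqref{ee5} one argues identically, using the standard non-commutator weighted spectral multiplier theorem for the Hermite operator (the Duong--Yan-type bound on which Lemma~\ref{lem55} is built).

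The only point requiring genuine attention is the uniform control of the H\"ormander-type norm $\sup_R\|\eta F_s(R\cdot)\|_{W^\infty_\sigma}$ independently of the sign choices $r_k(s)\in\{\pm 1\}$; this is the main---and essentially the only---obstacle. It is handled by the finite-overlap structure of the dyadic family $\{\varphi(2^{-k}\cdot)\}_k$: on any dyadic annulus the multiplier $F_s$ reduces to a signed sum of at most $O(1)$ terms $\pm\varphi(2^{-k}\cdot)$, each controlled in every Sobolev norm by $\|\varphi\|_{W^\infty_\sigma}$, so the bound is independent of $s$. With that uniform estimate in hand, the $s$-integration produces \eqref{ee5} and \eqref{ee6}.
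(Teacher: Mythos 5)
Your argument is essentially the paper's own: the proof of \eqref{ee6} given there uses exactly the Rademacher randomization $F(t,\lambda)=\sum_k r_k(t)\varphi(2^{-k}\lambda)$, the observation that $(1+|x|)^{-\alpha}\in A_2$ for $-n<\alpha<n$, and Lemma~\ref{lem55} applied uniformly in the randomization parameter (the paper cites \cite[Proposition~2.7]{CDHLY} for \eqref{ee5}, but the mechanism is the same). Your explicit choice of $\sigma>n$ to force $r_0=1$ in Lemma~\ref{lem55} is a small but welcome extra precision; the proposal is correct.
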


\begin{proof}
For the proof of
 \eqref{ee5}, we refer it to \cite[Proposition~2.7]{CDHLY}. We  show the proof of estimate \eqref{ee6} for completeness, although the proof is rather standard. Indeed, let $r_k(t)$ be the Rademacher functions and $\varphi_k(\lambda)=\varphi(2^{-k}\lambda)$. Define a function
  $$F(t,\lambda):=\sum_{k\in\ZZ}r_k(t)\varphi_k(\lambda).$$
  By the property of Rademacher functions, we have
  \begin{align*}
    \sum_{k\in\ZZ}|[b,\varphi_k(\sqrt{H})]f(x)|^2 \leq C\int_{0}^{1}|\sum_{k\in\ZZ}r_k(t)[b,\varphi_k(\sqrt{H})]f(x)|^2\mathrm{d}t=C
    \int_{0}^{1}|\,[b,F(t,\sqrt{H})]f(x)|^2\mathrm{d}t.
  \end{align*}
Integrating in $x$ with weight $(1+|x|)^{-\alpha}$, we see that
  \begin{align*}
    \int_{\mathbb{R}^n}\sum_{k\in\ZZ}|[b,\varphi_k(\sqrt{H})]f(x)|^2(1+|x|)^{-\alpha} \mathrm{d}x\leq
    C\int_{0}^{1}\int_{\mathbb{R}^n}|[b,F(t,\sqrt{H})]f(x)|^2(1+|x|)^{-\alpha} \mathrm{d}x\mathrm{d}t.
  \end{align*}
 Let $\eta\in C_c^\infty(\mathbb{R^+})$. It's easily to obtain that $\sup_{R>0}\|\eta F(t,R\cdot)\|_{W^\infty_{s}(\RR)}<\infty$ for $s>n/2$. It follows by  Lemma~\ref{lem55} and $(1+|x|)^{-\alpha}\in A_2$ whenever $-n<\alpha<n$ that
  \begin{align*}
  \int_{\mathbb{R}^n}|[b,F(t,\sqrt{H})]f(x)|^2(1+|x|)^{-\alpha} \mathrm{d}x
  &\leq C\|b\|^2_{\mathrm{BMO}}\sup_{R>0}\|\eta F(t,R\cdot)\|_{W^\infty_{s}(\RR)}\int_{\mathbb{R}^n} |f(x)|^2(1+|x|)^{-\alpha} \mathrm{d}x\\
  &\leq C\|b\|^2_{\mathrm{BMO}}\int_{\mathbb{R}^n} |f(x)|^2(1+|x|)^{-\alpha} \mathrm{d}x,
  \end{align*}
  with $C$ uniformly in $t\in[0,1]$.
\end{proof}


\section{ A weighted estimate for the square function}

In this section, we will show the following  weighted $L^2$ estimates for the square function $  G_{b,\delta}$, which is defined by
$$
G_{b,\delta}(f)(x):=\left(\int_{0}^{\infty}\left|\, \left[b,\phi\left(\delta^{-1}\left( 1- {H\over t^2}\right) \right) \right] f(x)\right |^2\frac{\mathrm{d}t}{t}\right)^{\frac12},
$$
where  $\phi\in C_c^\infty(\RR)$ with support $\{x:1/8\leq |x| \leq 1/2\}$ and $|\phi|\leq 1.$

\begin{proposition}\label{Pro1}
  Let $0\leq\alpha<n$, $0<\delta< 1/2$. Assume $b\in \mathrm{BMO}(\RR^n)$.
   Then for any $0<\upsilon\leq1/2$, there exists a constant $C_{\alpha,\upsilon}>0$  independent of $\delta$ such that
  \begin{align}\label{e2}
    \int_{\mathbb{R}^n}|G_{b,\delta}(f)(x)|^2(1+|x|)^{-\alpha}\mathrm{d}x\leq C_{\alpha,\upsilon} \|b\|_{\mathrm{BMO}}^2B^{\upsilon}_{\alpha,n}(\delta)
    \int_{\mathbb{R}^n}|f(x)|^2(1+|x|)^{-\alpha}\mathrm{d}x,
  \end{align}
  where
  \begin{align*}
  B^{\upsilon}_{\alpha,n}(\delta)=
  \begin{cases}
    \delta^{1-\upsilon}, & \mbox{\rm{if} } 0< \alpha< 1, n=1; \alpha=0,\  n\geq 1;\\[4pt]
    \delta^{\frac{3-\alpha}{2}-\upsilon}, & \mbox{\rm{if} } 1<\alpha <n,\  n\geq2.
  \end{cases}
  \end{align*}
\end{proposition}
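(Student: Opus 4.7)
The plan is to reduce \eqref{e2} to weighted estimates on spectrally localized pieces via a smooth dyadic decomposition of the $t$-integration, and then to balance the trace-lemma gain of Lemma~\ref{L1} against the commutator estimates of Lemmas~\ref{L4}--\ref{L6} to extract the factor $B^{\upsilon}_{\alpha,n}(\delta)$.

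First, pick $\Psi\in C_c^\infty((1/2,2))$ with $\sum_{j\in\ZZ}\Psi(2^{-j}t)^2\equiv 1$ on $(0,\infty)$ and write
\[
|G_{b,\delta}(f)(x)|^2 = \sum_{j\in\ZZ}\int_0^\infty \Psi(2^{-j}t)^2\,|[b,T_t]f(x)|^2\,\frac{\mathrm{d}t}{t},\qquad T_t:=\phi\bigl(\delta^{-1}(1-H/t^2)\bigr).
\]
For $t\in\mathrm{supp}\,\Psi(2^{-j}\cdot)$ the symbol of $T_t$ as a function of $\sqrt{H}$ is supported in an annulus $[c\,2^{j},\,C\,2^{j}]$ of thickness $\sim 2^{j}\delta$. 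I pick $\eta\in C_c^\infty((1/2,2))$ with $\eta\equiv 1$ on $[c,C]$, so that $T_t=T_t\,\eta(2^{-j}\sqrt{H})$, and split the commutator as
\[
[b,T_t]=[b,T_t]\,\eta(2^{-j}\sqrt{H})+T_t\,[b,\eta(2^{-j}\sqrt{H})].
\]

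For the second piece, summation over $j$ can be absorbed by Lemma~\ref{L6} applied to the sequence $\{[b,\eta(2^{-j}\sqrt{H})]f\}_j$, which controls it in $L^2((1+|x|)^{-\alpha})$ by $\|b\|_{\mathrm{BMO}}^2\|f\|^2_{L^2((1+|x|)^{-\alpha})}$. It then remains to show that for $g$ spectrally localized in $\sqrt{H}\sim 2^{j}$,
\[
\int_{2^{j-1}}^{2^{j+2}} \|T_t g\|^2_{L^2((1+|x|)^{-\alpha})}\,\frac{\mathrm{d}t}{t}\lesssim B^{\upsilon}_{\alpha,n}(\delta)\,\|g\|^2_{L^2(\RR^n)}.
\]
With $N\sim 2^{2j}$ and the rescaled symbol $F(N\cdot)=\phi(\delta^{-1}(1-N^2\cdot^2/t^2))$ supported on a set of measure $O(\delta)$, one gets $\|F(N\cdot)\|_{N^2,2}^2\lesssim\delta$, so \eqref{e4} gives a gain of $N\delta$ per fixed $t$ when $\alpha>1$ and the corresponding downgraded gain from the second branch of \eqref{e4} when $0<\alpha\le 1$.

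For the main ``band'' piece $[b,T_t]\,\eta(2^{-j}\sqrt{H})$, I use Lemma~\ref{lem55} to obtain $\|[b,T_t]h\|_{L^2((1+|x|)^{-\alpha})}\lesssim\|b\|_{\mathrm{BMO}}\,\delta^{-s}\,\|h\|_{L^2((1+|x|)^{-\alpha})}$ with any $s>n/2$, valid since $(1+|x|)^{-\alpha}\in A_2$ for $0\le\alpha<n$ and $\|\phi(\delta^{-1}(1-\cdot/t^2))\|_{W^\infty_s}\lesssim\delta^{-s}$. This Sobolev blow-up $\delta^{-s}$ must be traded against the trace-lemma gain $\delta^{1/2}$ of \eqref{e4} by bilinear interpolation at the level of the square-function integrand in $t$, much as in the proof of the second branch of \eqref{e4}. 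The endpoint $\alpha=0$ comes from an unweighted Plancherel/orthogonality argument (using the $\delta$-thin support in $t$ for each fixed eigenvalue $2k+n$) yielding $\delta^{1-\upsilon}$; interpolating with the $\alpha>1$ endpoint (controlled by Lemma~\ref{L1} together with the pointwise maximal estimate of Lemma~\ref{L4} and the $L^2$-boundedness of $M_b$ from Lemma~\ref{L3}) then yields $B^{\upsilon}_{\alpha,n}(\delta)=\delta^{(3-\alpha)/2-\upsilon}$ for $1<\alpha<n$, and the intermediate case $0<\alpha<1$, $n=1$, is obtained by bilinear interpolation between $\alpha=0$ and $\alpha>1$ exactly as in the proof of \eqref{e4}.

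The main obstacle will be balancing the Sobolev blow-up $\delta^{-s}$ coming from Lemma~\ref{lem55} against the trace-lemma gain $\delta^{1/2}$ of \eqref{e4}: one must arrange the interpolation so that both endpoints carry matching Sobolev orders and the interpolated estimate is uniform in $j$ and in $t$ inside each dyadic annulus. The small loss $\upsilon\in(0,1/2]$ is precisely the overhead of this trade-off, and it is what dictates the exponent in $B^{\upsilon}_{\alpha,n}(\delta)$ of \eqref{BBB}.
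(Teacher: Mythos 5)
Your reduction to spectrally localized pieces and your treatment of the term $T_t\,[b,\eta(2^{-j}\sqrt{H})]$ (absorbed by Lemma~\ref{L6} together with a weighted square-function bound for $T_t$ alone) match the paper's handling of its term $I_1(k)$, and the trace-lemma computation you sketch for the non-commutator square function is essentially the paper's Lemma~\ref{L7}. The gap is in the main band piece $[b,T_t]\,\eta(2^{-j}\sqrt{H})$. You propose to get a bound $\delta^{-s}$ ($s>n/2$) from Lemma~\ref{lem55} and then ``trade'' it against the trace-lemma gain $\delta^{1/2}$ of \eqref{e4} by bilinear interpolation. This cannot work as stated: Lemma~\ref{lem55} bounds the \emph{commutator} $[b,T_t]$ on weighted $L^2$, while \eqref{e4} bounds the \emph{multiplier} $F(\sqrt H)$ from unweighted to weighted $L^2$; these are estimates for two different operators, and there is no interpolation that transfers the $\delta$-gain of the second to the first while preserving the commutator structure. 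Even formally, interpolating $\delta^{-s}$ with $s>n/2$ against $\delta^{1/2}$ forces the interpolation parameter to sit at the non-commutator endpoint to obtain any positive power of $\delta$, so the output is not a bound for $[b,T_t]$. The same objection applies to your claimed $\alpha=0$ endpoint: Plancherel/orthogonality in $t$ gives $\delta$ for $\phi_\delta(t^{-1}\sqrt H)$ itself, but multiplication by $b$ destroys exactly that orthogonality, and obtaining $\delta^{1-\upsilon}$ for the unweighted commutator square function is the hardest step, not a free endpoint.

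The paper's mechanism for this step is different and is the missing idea: after the Fourier-support decomposition $\phi_\delta=\sum_{j\ge j_0}\phi_{\delta,j}$ of \eqref{d0}, finite speed of propagation \eqref{FS} localizes $\phi_{\delta,j}(t^{-1}\sqrt H)$ to cubes $Q_{\mathbf m}$ of side $\sim 2^{j-k}$; one then writes $[b,\phi_{\delta,j}(t^{-1}\sqrt H)]g=(b-b_{\widetilde Q_{\mathbf m}})\phi_{\delta,j}(t^{-1}\sqrt H)g-\phi_{\delta,j}(t^{-1}\sqrt H)(b-b_{\widetilde Q_{\mathbf m}})g$ on each cube, controls $\|b-b_{\widetilde Q_{\mathbf m}}\|_{L^q(\widetilde Q_{\mathbf m})}$ by John--Nirenberg, and applies H\"older with $1/2=1/q+1/r$. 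This converts the commutator bound into an $L^{r'}\to L^r$ bound for the square function $T^{\delta}_{j,k}$ (Lemma~\ref{L8}, obtained by interpolating the weighted $L^2$ bound with an $L^1\to L^\infty$ bound), and the $\upsilon$-loss arises from the factor $|\widetilde Q_{\mathbf m}|^{2/q}\sim\delta^{-2n(1-2/r)}$ by taking $r$ close to $2$ --- not from a Sobolev-order trade-off. Without some device of this kind (localization plus mean subtraction, or an equivalent way to exploit the BMO cancellation at scale $2^{j-k}$), your outline does not produce any positive power of $\delta$ for the commutator term, so the proof as proposed does not close.
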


 We select an even function $\eta\in C_c^{\infty}(\mathbb{R})$ supported in $\{u:1/2\leq |u|\leq 2\}$ such that $\sum_{j\in\mathbb{Z}}\eta(2^{-j}u)=1,\forall\ u>0.$ Given $0<\delta<1/2$, let $j_0=[-\log_2\delta]-1$. Set $\eta_j(u)=\eta(2^{-j}u)$ for $j>j_0$ and $\eta_{j_0}(u)=1-\sum_{j\geq j_0+1}\eta(2^{-j}u)$, then we have
$\sum_{j\geq j_0}\eta_j(u)\equiv1,\forall u>0.$
Let us use $\phi_{\delta}(s)$ to denote $\phi(\delta^{-1}(1-s^2))$. For $j\geq j_0$, we define
\begin{align}\label{d5}
  \phi_{\delta,j}(s) =\frac{1}{2\pi}\int_{\mathbb{R}}\eta_j(u)\widehat{\phi_{\delta}}(u)e^{isu}\mathrm{d}u.
\end{align}
Following from dyadic decomposition, we have
\begin{align}\label{d0}
\phi_{\delta}(t^{-1}s)=\sum_{j\geq j_0}\phi_{\delta,j}(t^{-1}s).
\end{align}
The following  is a useful  estimate through the paper, for any $N\in \mathbb{N}$ and $j\geq j_0$,
\begin{align}\label{d1}
  |\phi_{\delta,j}(s)|\leq
  \begin{cases}
    C_N2^{(j_0-j)N}, & \mbox{if } |s|\in[1-2\delta,1+2\delta];\\
   C_N 2^{j-j_0}\big(1+2^j|\,s-1|\big)^{-N}, & \mbox{otherwise }.
  \end{cases}
\end{align}
See \cite[p.23 (3.16)]{CDHLY} for the proof.

To prove the proposition~\ref{Pro1}, we need the following lemmas.

\begin{lemma}\label{L7}
Let $0\leq\alpha<n$, $k\geq 0$ and $j\geq j_0$. We define an operator associated with $\phi_{\delta,j}$ by
$$T_{j,k}^{\delta}(f):=\left(\int_{2^{k-1}}^{2^{k+2}} |\phi_{\delta,j}(t^{-1}\sqrt{H})f|^2\frac{\mathrm{d}t}{t}\right)^{1/2}.$$
Then for any $0<\varepsilon\leq1/2$ and  $N\in\mathbb{N}$, there exists a constant $C_{\varepsilon,N}$ such that
   \begin{align*}
 \left\|T_{j,k}^{\delta}(f)\right\|_{L^2(\RR^n,(1+|x|)^{-\alpha})}\leq C_{\varepsilon,N}2^{(j_0-j)N}A^{\varepsilon}_{n}(\delta,\alpha)^{1/2}\left\|f\right\|_{L^2(\RR^n,(1+|x|)^{-\alpha})},
  \end{align*}
  where
    \begin{align}\label{AAA}
  A^{\varepsilon}_{n}(\delta,\alpha):=
  \begin{cases}
    \delta,&  \mbox{\rm{if} } \alpha=0, n\geq 1;\\
    \delta^{1-\varepsilon}, & \mbox{\rm{if} } 0<\alpha<1 , n=1 ;\\
    \delta^{\frac{3-\alpha}{2}}, & \mbox{\rm{if} } 1< \alpha<n, n\geq2.
  \end{cases}
  \end{align}
\end{lemma}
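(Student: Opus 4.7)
The plan is to reduce, via Fubini and Minkowski, to a single-scale operator estimate and then to combine the trace-type inequality of Lemma~\ref{L1} with Lemma~\ref{L2} and the weighted Littlewood--Paley estimate of Lemma~\ref{L6} to produce the claimed $L^{2}((1+|x|)^{-\alpha})$ bound. First, Fubini gives
$$\|T_{j,k}^{\delta}f\|_{L^{2}((1+|x|)^{-\alpha})}^{2}=\int_{2^{k-1}}^{2^{k+2}}\|\phi_{\delta,j}(t^{-1}\sqrt{H})f\|_{L^{2}((1+|x|)^{-\alpha})}^{2}\,\frac{dt}{t},$$
so it suffices to bound the single-scale operator $F_{t}(\sqrt{H}):=\phi_{\delta,j}(t^{-1}\sqrt{H})$ on $L^{2}((1+|x|)^{-\alpha})$ uniformly for $t\in[2^{k-1},2^{k+2}]$.

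For fixed $t$, the pointwise bounds \eqref{d1} show that $F_{t}(\lambda)$ is peak-localised near $\lambda\sim t\sim 2^{k}$ with height $\lesssim 2^{(j_{0}-j)N_{1}}$ on a peak of width $\sim\delta t$, and has rapidly decaying polynomial tails once the tail exponent there is taken sufficiently larger than $N_{1}$. Choosing $N\sim 2^{k+3}$ so that $F_{t}$ is (up to rapidly decaying errors) supported in $[N/4,N]$, a direct computation yields $\|F_{t}(N\cdot)\|_{N^{2},2}^{2}\lesssim\delta\cdot 2^{2(j_{0}-j)N_{1}}$ for $\alpha>1$, and the analogous bound for $\|F_{t}(N\cdot)\|_{N^{2},2(1+\varepsilon)/\alpha}^{2}$ for $0<\alpha\leq 1$. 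Applying Lemma~\ref{L1} yields the $L^{2}\to L^{2}((1+|x|)^{-\alpha})$ estimate $\|F_{t}(\sqrt{H})f\|_{L^{2}((1+|x|)^{-\alpha})}^{2}\lesssim 2^{k}\delta\cdot 2^{2(j_{0}-j)N_{1}}\|f\|_{L^{2}}^{2}$ for $\alpha>1$, with analogous bounds in the other cases. In the unweighted case $\alpha=0$, the spectral Plancherel identity applied to $\int|\phi_{\delta,j}(t^{-1}\sqrt{2m+n})|^{2}\,dt/t$, after the change of variable $u=\sqrt{2m+n}/t$, produces the explicit factor $\int|\phi_{\delta,j}(u)|^{2}\,du/u\lesssim\delta\cdot 2^{2(j_{0}-j)N_{1}}$. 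To upgrade the $\|f\|_{L^{2}}$ on the right to $\|f\|_{L^{2}((1+|x|)^{-\alpha})}$, I insert a smooth dyadic spectral cutoff $\widetilde{\varphi}_{k}(\sqrt{H})$ at scale $\lambda\sim 2^{k}$: Lemma~\ref{L6} gives $\|\widetilde{\varphi}_{k}(\sqrt{H})f\|_{L^{2}((1+|x|)^{-\alpha})}\lesssim\|f\|_{L^{2}((1+|x|)^{-\alpha})}$ uniformly in $k$, and for the spectrally localised piece $g=\widetilde{\varphi}_{k}(\sqrt{H})f$ Lemma~\ref{L2} together with the Cauchy--Schwarz splitting $\|g\|_{L^{2}}^{2}\leq\|g\|_{L^{2}((1+|x|)^{-\alpha})}\|g\|_{L^{2}((1+|x|)^{\alpha})}$ yields a Bernstein-type estimate $\|g\|_{L^{2}}\lesssim 2^{k\alpha/2}\|g\|_{L^{2}((1+|x|)^{-\alpha})}$.

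The hardest step is reconciling the $k$-dependence. Naively combining the trace lemma (which contributes $N\sim 2^{k}$) with the Bernstein step (which costs $2^{k\alpha}$) produces a preliminary bound of order $2^{k(1+\alpha)}\delta\cdot 2^{2(j_{0}-j)N_{1}}$, which is not uniform in $k$. The resolution is to exploit the sharp peak width $\sim\delta t$ in Lemma~\ref{L1} more carefully and to appeal to the Littlewood--Paley orthogonality of Lemma~\ref{L6} applied to the $t$-averaged spectral symbol $M(\lambda)=\int_{2^{k-1}}^{2^{k+2}}|\phi_{\delta,j}(t^{-1}\lambda)|^{2}\,dt/t$, which is a smooth plateau of height $\sim\delta\cdot 2^{2(j_{0}-j)N_{1}}$ on $[2^{k-1},2^{k+2}]$ with transitions of width $\sim 2^{k}\delta$; the resulting Sobolev-norm estimate of $M$ supplies the additional $\delta^{(1-\alpha)/2}$ factor needed for $\alpha>1$ and, through the bilinear interpolation in \eqref{e4}, the $\delta^{1-\varepsilon}$ exponent for $0<\alpha\leq 1$ with $n=1$, matching $A_{n}^{\varepsilon}(\delta,\alpha)^{1/2}$.
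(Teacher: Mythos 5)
Your treatment of the case $\alpha=0$ is fine (the change of variables $u=\lambda/t$ in the spectral Plancherel identity, together with \eqref{d1}, does give $\int|\phi_{\delta,j}(u)|^2\,\mathrm{d}u/u\lesssim_N 2^{2(j_0-j)N}\delta$, and this is in fact a cleaner route than the paper's). But for $0<\alpha<n$ your argument has a genuine gap at exactly the step you flag as ``the hardest''. The opening reduction to a \emph{uniform-in-$t$} single-scale bound for $\phi_{\delta,j}(t^{-1}\sqrt H)$ throws away a full factor of $\delta$: the $\mathrm{d}t/t$ integral over $[2^{k-1},2^{k+2}]$ of a uniform bound contributes only a constant, whereas the extra $\delta$ must come from the fact that each fixed spectral window of width $\sim\delta 2^k$ is ``seen'' only by a set of $t$ of $\mathrm{d}t/t$-measure $\sim\delta$. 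Your accounting confirms the shortfall: with $2^k\lesssim\delta^{-1/2}$ the preliminary bound $2^{k(1+\alpha)}\delta$ is only $\delta^{(1-\alpha)/2}$, one power of $\delta$ short of the target $\delta^{(3-\alpha)/2}$, and for $2^k\gg\delta^{-1/2}$ it is unbounded. The proposed repair --- applying the weighted Littlewood--Paley estimate of Lemma~\ref{L6} to the $t$-averaged symbol $M(\lambda)=\int_{2^{k-1}}^{2^{k+2}}|\phi_{\delta,j}(t^{-1}\lambda)|^2\,\mathrm{d}t/t$ --- cannot work as stated, because the weighted norm does not diagonalize $H$: expanding in Hermite projections,
\begin{align*}
\int_{2^{k-1}}^{2^{k+2}}\big\|\phi_{\delta,j}(t^{-1}\sqrt H)f\big\|_{L^2(w)}^2\frac{\mathrm{d}t}{t}
=\sum_{m,m'}\Big(\int\phi_{\delta,j}(t^{-1}\mu_m)\overline{\phi_{\delta,j}(t^{-1}\mu_{m'})}\,\frac{\mathrm{d}t}{t}\Big)\int w\,P_mf\,\overline{P_{m'}f},
\end{align*}
and the off-diagonal terms with $|\mu_m-\mu_{m'}|\lesssim\delta 2^k$ do not vanish, so this quantity is not $\|M(\sqrt H)^{1/2}f\|_{L^2(w)}^2$. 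Moreover Lemma~\ref{L6} is a square-function estimate for a fixed dyadic bump, not a multiplier theorem that converts the small height of $M$ into operator-norm gain; a H\"ormander-type bound on $M^{1/2}$ would require Sobolev norms at scale $\delta$, which blow up.

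What is actually needed (and what the paper does) is an almost-orthogonality between the $t$-decomposition and a matching spectral decomposition: split $[2^{k-1},2^{k+2}]$ into intervals $I_i$ of $\mathrm{d}t/t$-measure $\le\delta$ and introduce spectral cutoffs $\zeta_{i'}(\sqrt H)$ at resolution $2^{k}\delta$, so that $\phi_{\delta,j}(t^{-1}\sqrt H)\zeta_{i'}(\sqrt H)\ne0$ for $t\in I_i$ only when $|i-i'|\lesssim 2^{\ell}$; each spectral piece then contributes $t$-measure $\sim 2^{\ell}\delta$, and summing with the $L^2$ square-function bound recovers the missing $\delta$. This must be combined with two further devices absent from your proposal: for $2^k\lesssim\delta^{-1/2}$, the substitution $f\mapsto(I+H)^{\alpha/4}f$ via Lemma~\ref{L2} together with the explicit $\min\{R^2,1+Rt2^{\ell}\delta\}$ count in the trace lemma \eqref{e4} (your Cauchy--Schwarz Bernstein step is an acceptable variant here, but only once the $t$-orthogonality is in place); and for $2^k\gtrsim\delta^{-1/2}$, where the loss $2^{k\alpha}$ is fatal, a spatial localization to cubes of side $2^{j-k}$ using finite speed of propagation, on which the weight is essentially constant. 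Your proposal addresses neither the correct mechanism for the $t$-gain in the weighted setting nor the large-$k$ regime.
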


\begin{lemma}\label{L8}
 Let $T_{j,k}^{\delta}$ be defined in Lemma~\ref{L7} and $k\geq 0$, $0\leq\alpha< n$, $j\geq j_0$. Then for any $0<\varepsilon\leq1/2$  and  $N\in\mathbb{N}$, there exists a constant $C_{\varepsilon,N}$ such that
  \begin{align*}
 \|T_{j,k}^{\delta}(f)\|_{L^r(\mathbb{R}^n,(1+|x|)^{-\alpha})}
 \leq C_{\varepsilon,N}2^{(j_0-j)N}2^{(1-\theta) kn}A^{\varepsilon}_{n}(\delta,\alpha)^{\theta/2}\|f\|_{L^{r'}(\mathbb{R}^n,(1+|x|)^{-\alpha(r'-1)})},
  \end{align*}
   where $\theta=2/r$, $2< r<\infty$.
\end{lemma}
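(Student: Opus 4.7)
The plan is to derive Lemma~\ref{L8} from Lemma~\ref{L7} by complex interpolation, using an auxiliary unweighted $L^1\to L^\infty$ estimate for the vector-valued operator $T_{j,k}^\delta$, regarded as taking values in the Hilbert space $\mathcal{H}_k := L^2([2^{k-1},2^{k+2}],\,\mathrm{d}t/t)$. The shape of the target bound --- in particular the factor $2^{(1-\theta)kn}$ with $\theta = 2/r$ and the exponent $\theta/2$ on $A^\varepsilon_n(\delta,\alpha)$ --- signals interpolation at parameter $\theta$ between Lemma~\ref{L7} and a kernel endpoint of order $2^{kn}$.

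The first task is to show
\begin{equation*}
\|T_{j,k}^{\delta}\|_{L^1(\mathbb{R}^n) \to L^\infty(\mathbb{R}^n;\mathcal{H}_k)} \leq C_N\, 2^{(j_0-j)N}\, 2^{kn}.
\end{equation*}
By Minkowski's inequality this reduces to the pointwise estimate $|K_{\phi_{\delta,j}(t^{-1}\sqrt H)}(x,y)| \leq C_N 2^{(j_0-j)N} 2^{kn}$, uniformly in $t\in[2^{k-1},2^{k+2}]$. From \eqref{d1} one obtains $\|\phi_{\delta,j}\|_\infty \leq C_N 2^{(j_0-j)N}$; combining this with the standard Weyl-type kernel bound $|K_{\phi(t^{-1}\sqrt H)}(x,y)| \leq C\|\phi\|_\infty\, t^n$ for smooth spectral multipliers of the Hermite operator (derivable either from the Mehler representation of $e^{-zH}$ via the Phragm\'en--Lindel\"of device used in Lemma~\ref{L4}, or from finite propagation \eqref{d3} applied to a suitable decomposition of $\phi_{\delta,j}$) gives the required bound since $t\leq 2^{k+2}$.

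The second step is Stein--Weiss complex interpolation with change of measure between
\begin{equation*}
M_0 := C_N\, 2^{(j_0-j)N}\, 2^{kn} \ \ (\text{for }L^1\to L^\infty), \qquad M_1 := C_{\varepsilon,N}\, 2^{(j_0-j)N}\, A^\varepsilon_n(\delta,\alpha)^{1/2} \ \ (\text{for }L^2(w)\to L^2(w)),
\end{equation*}
where $w = (1+|x|)^{-\alpha}$. The interpolation rule $1/p_s = 1 - s/2$, $1/q_s = s/2$, with source weight $w_s = w^{s p_s/2}$ and target weight $v_s = w^{s q_s/2}$, evaluated at $s = \theta = 2/r$, yields $p_s=r'$, $q_s=r$, source weight $w^{\theta r'/2} = w^{r'/r} = w^{r'-1} = (1+|x|)^{-\alpha(r'-1)}$ and target weight $w$, precisely matching the statement. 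The interpolated norm is $M_0^{1-\theta} M_1^\theta = C_{\varepsilon,N}\, 2^{(j_0-j)N}\, 2^{(1-\theta)kn}\, A^\varepsilon_n(\delta,\alpha)^{\theta/2}$, which is the claimed bound.

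The principal obstacle is the endpoint kernel estimate $|K_{\phi_{\delta,j}(t^{-1}\sqrt H)}(x,y)| \leq C_N 2^{(j_0-j)N} t^n$ with the sharp exponent $n$: a weaker polynomial in $t$ would degrade the factor $2^{(1-\theta)kn}$ after interpolation and falsify the statement. This bound relies on the detailed structure of the Hermite projectors (Mehler formula / heat semigroup at small times), together with the smallness of $\phi_{\delta,j}$ off the spectral window from \eqref{d1}. Once this endpoint is in hand, the rest of the argument is a bookkeeping exercise verifying the compatibility of the interpolated weights and bounds with the statement.
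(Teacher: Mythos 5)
Your proposal is correct and follows essentially the same route as the paper: an $L^1\to L^\infty$ endpoint bound of size $C_N 2^{(j_0-j)N}2^{kn}$ for $T^\delta_{j,k}$, followed by Stein--Weiss interpolation with change of measure against Lemma~\ref{L7}, with exactly the weight and exponent bookkeeping you describe. The only (cosmetic) differences are that the paper obtains the endpoint by factoring $\phi_{\delta,j}=\phi_{\delta,j}^{1/2}\cdot\phi_{\delta,j}^{1/2}$ and using the $L^2\to L^\infty$ Plancherel estimate of Duong--Ouhabaz--Sikora on each piece of the $\psi_{\delta,\ell}$ decomposition (one concrete realization of your ``Weyl-type kernel bound''), and that it invokes sublinear interpolation directly rather than your Hilbert-space-valued linearization.
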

The proof of  Lemmas~\ref{L7} and \ref{L8} will be given later.  Next, let us use
 Lemmas~\ref{L7} and \ref{L8}     to prove  Proposition~\ref{Pro1}.

\begin{proof}[Proof of Proposition~\ref{Pro1}]
The eigenvalue of the Hermite operator is bigger  than $1$ and supp$\,\phi\subseteq\{x:1/8\leq |x|\leq 1/2\}$ imply that
$$
  \int_{0}^{+\infty}\int_{\mathbb{R}^n} |[b,\phi_{\delta}(t^{-1}\sqrt{H})]f(x)|^2(1+|x|)^{-\alpha}\mathrm{d}x\frac{\mathrm{{d}}t}{t}
 = \int_{1/2}^{+\infty}\int_{\mathbb{R}^n} |[b,\phi_{\delta}(t^{-1}\sqrt{H})]f(x)|^2(1+|x|)^{-\alpha}\mathrm{d}x\frac{\mathrm{d}t}{t}.
$$

Choose a function $\varphi$ with support $\{s:1\leq |s|\leq3\}$ and $\sum_{k\in\mathbb{Z}}\varphi(2^{-k}s)=1,\forall s>0$. Let $\varphi_k(s)=\varphi(2^{-k}s)$.
There is a uniform bound $C$ for any $t\in [1/2,\infty)$ such that $\#\{k\in \ZZ:\phi_{\delta}(t^{-1}s)\varphi_k(s)\neq  0,s>0\}\leq C$, where $\#$ is the counting measure. Hence, we have
\begin{align*}
&\int_{1/2}^{+\infty}\int_{\mathbb{R}^n} |[b,\phi_{\delta}(t^{-1}\sqrt{H})]f(x)|^2(1+|x|)^{-\alpha}\mathrm{d}x\frac{\mathrm{d}t}{t}\nonumber\\
&\leq C\sum_{k\in\ZZ}\int_{1/2}^{\infty}\int_{\mathbb{R}^n}|[b,\phi_{\delta}(t^{-1}\sqrt{H})\varphi_k(\sqrt{H})]f(x)|^2 (1+|x|)^{-\alpha}\mathrm{d}x\frac{\mathrm{d}t}{t}\\
&=C\sum_{k\geq0}\int_{2^{k-1}}^{2^{k+2}}\int_{\mathbb{R}^n}|[b,\phi_{\delta}(t^{-1}\sqrt{H})\varphi_k(\sqrt{H})]f(x)|^2 (1+|x|)^{-\alpha}\mathrm{d}x\frac{\mathrm{d}t}{t},
\end{align*}
where the last equality we use the support property of $\phi_{\delta}(t^{-1}s)$ and $\varphi_k(s)$.

\hspace{-0.5cm} Note that
$$[b,\phi_{\delta}(t^{-1}\sqrt{H})\varphi_k(\sqrt{H})]f=
[b,\phi_{\delta}(t^{-1}\sqrt{H})]\varphi_k(\sqrt{H})f+\phi_{\delta}(t^{-1}\sqrt{H})[b,\varphi_k(\sqrt{H})]f.
$$
It follows that
\begin{align}\label{f0}
&\int_{0}^{+\infty}\int_{\mathbb{R}^n} |[b,\phi_{\delta}(t^{-1}\sqrt{H})]f(x)|^2(1+|x|)^{-\alpha}\mathrm{d}x\frac{\mathrm{d}t}{t}\leq C\left(\sum_{k\geq0}I_1(k)+\sum_{k\geq0}I_2(k)\right),
\end{align}
where
\begin{align*}
  I_1(k):=\int_{2^{k-1}}^{2^{k+2}}\int_{\mathbb{R}^n}|\phi_{\delta}(t^{-1}\sqrt{H})[b,\varphi_k(\sqrt{H})]f(x)|^2
 (1+|x|)^{-\alpha}\mathrm{d}x\frac{\mathrm{d}t}{t},
\end{align*}
\begin{align*}
  I_2(k):=\int_{2^{k-1}}^{2^{k+2}}\int_{\mathbb{R}^n}|[b,\phi_{\delta}(t^{-1}\sqrt{H})]\varphi_k(\sqrt{H})f(x)|^2
 (1+|x|)^{-\alpha}\mathrm{d}x\frac{\mathrm{d}t}{t}.
\end{align*}
For the first term $I_1(k)$, it follows by equality~\eqref{d0} and Minkowski's inequality that
    \begin{align*}
 I_1(k) &\leq \left(\sum_{j\geq j_0}\left( \int_{2^{k-1}}^{2^{k+2}}\int_{\mathbb{R}^n}
   |\phi_{\delta,j}(t^{-1}\sqrt{H})[b,\varphi_k(\sqrt{H})]f(x)|^2(1+|x|)^{-\alpha}\mathrm{d}x\frac{\mathrm{d}t}{t}
  \right)^{1/2} \right)^2.
  \end{align*}
 By Lemma~\ref{L7},  we have
    \begin{align*}
   I_1(k) & \leq C_{\varepsilon,N}\left(\sum_{j\geq j_0}2^{(j_0-j)N}\right)^2A^{\varepsilon}_{n}(\delta,\alpha)
   \int_{\mathbb{R}^n}|[b,\varphi_k(\sqrt{H})]f(x)|^2(1+|x|)^{-\alpha}\mathrm{d}x\\
   &\leq C_{\varepsilon}A^{\varepsilon}_{n}(\delta,\alpha)
   \int_{\mathbb{R}^n}|[b,\varphi_k(\sqrt{H})]f(x)|^2(1+|x|)^{-\alpha}\mathrm{d}x.
 \end{align*}
 Then taking the sum over $k$ and using estimate~\eqref{ee6} in Lemma~\ref{L6} give
 \begin{align}\label{f1}
  \sum_{k\geq0}I_1(k)\leq C_{\varepsilon}A^{\varepsilon}_{n}(\delta,\alpha)
   \int_{\mathbb{R}^n}|f(x)|^2(1+|x|)^{-\alpha}\mathrm{d}x.
 \end{align}
For the second term $I_2(k)$, it follows by equality~\eqref{d0} and Minkowski's inequality that
  \begin{align*}
 I_2(k) &\leq \left(\sum_{j\geq j_0}\left( \int_{2^{k-1}}^{2^{k+2}}\int_{\mathbb{R}^n}
   |[b,\phi_{\delta,j}(t^{-1}\sqrt{H})]\varphi_k(\sqrt{H})f(x)|^2(1+|x|)^{-\alpha}\mathrm{d}x\frac{\mathrm{d}t}{t}
  \right)^{1/2} \right)^2.
  \end{align*}
  Fix $j,k$. We decompose $\mathbb{R}^n$ into disjoint cubes with side length $2^{j-k+2}$.
  For a given  $\mathbf{m}=(\mathbf{m}_1,\cdots,\mathbf{m}_n)\in\mathbb{Z}^n$, we define an associated cube by
$$Q_\mathbf{m}=\Big[2^{j-k+2}(\mathbf{m}_1-1/2),2^{j-k+2}(\mathbf{m}_1-1/2)\Big)
\times\cdots\times\Big[2^{j-k+2}(\mathbf{m}_n-1/2),2^{j-k+2}(\mathbf{m}_n-1/2)\Big).$$
Clearly, $Q_\mathbf{m}$ is a cube with centre $(\mathbf{m}_1,\cdots,\mathbf{m}_n)$ and side length $2^{j-k+2}$. $\{Q_\mathbf{m}\}$ are disjoint and $\mathbb{R}^n=\bigcup_{\mathbf{m}\in\mathbb{Z}^n}Q_\mathbf{m}.$ For each $\mathbf{m}\in\mathbb{Z}^n$, we define $\widetilde{Q}_\mathbf{m}$ by
$$\widetilde{Q}_\mathbf{m}
:=\bigcup_{\mathbf{m}'\in \mathbb{Z}^n:\mathrm{dist}(Q_\mathbf{m},Q_{\mathbf{m}'})\leq\sqrt{n}2^{j-k+3}}Q_{\mathbf{m}'}.$$
If $t\in[2^{k-1},2^{k+2}]$, supp~$\widehat{\phi_{\delta,j}(t^{-1}}\cdot)\subseteq[-2^{j-k+2},2^{j-k+2}]$. By finite speed of propagation \eqref{FS}, $K_{\phi_{\delta,j}(t^{-1}\sqrt{H})}\subseteq \mathfrak{D}_{2^{j-k+2}}$. It follows that, for any $t\in[2^{k-1},2^{k+2}]$
 \begin{align}\label{g1}
\Big|[b,\phi_{\delta,j}(t^{-1}\sqrt{H})]g\Big|
&=\Big|\sum_{\mathbf{m},\mathbf{m}'\in \mathbb{Z}^n:\mathrm{dist}(Q_\mathbf{m},Q_{\mathbf{m}'})\leq \sqrt{n}2^{j-k+3}}
\chi_{Q_\mathbf{m}}[b,\phi_{\delta,j}(t^{-1}\sqrt{H})]\chi_{Q_\mathbf{m'}}g\Big|\nonumber\\[4pt]
&\leq \sum_{\mathbf{m}\in \mathbb{Z}^n}
\Big|\chi_{Q_\mathbf{m}}[b,\phi_{\delta,j}(t^{-1}\sqrt{H})]\chi_{\widetilde{Q}_\mathbf{m}}g\Big|,
 \end{align}
where as usual $\chi_{Q_\mathbf{m}}$ is the characteristic function of $Q_\mathbf{m}$.

\hspace{-0.5cm} Let $b_{\widetilde{Q}_\mathbf{m}}=|\widetilde{Q}_\mathbf{m}|^{-1}\int_{\widetilde{Q}_\mathbf{m}}b(y)\mathrm{d}y$. Obviously,
 \begin{align}\label{g2}
 [b,\phi_{\delta,j}(t^{-1}\sqrt{H})]g
=(b-b_{\widetilde{Q}_\mathbf{m}})\phi_{\delta,j}(t^{-1}\sqrt{H})g-\phi_{\delta,j}(t^{-1}\sqrt{H})(b-b_{\widetilde{Q}_\mathbf{m}})g.
 \end{align}
It follows from \eqref{g1}, \eqref{g2}  and the disjointness of ${Q}_\mathbf{m}$ that
\begin{align*}
&\int_{2^{k-1}}^{2^{k+2}}\int_{\mathbb{R}^n}
   |[b,\phi_{\delta,j}(t^{-1}\sqrt{H})]\varphi_k(\sqrt{H})f(x)|^2(1+|x|)^{-\alpha}\mathrm{d}x\frac{\mathrm{d}t}{t}\\
 &=\sum_{\mathbf{m}\in \mathbb{Z}^n}\int_{2^{k-1}}^{2^{k+2}}\int_{\mathbb{R}^n}
   |\chi_{Q_\mathbf{m}}[b,\phi_{\delta,j}(t^{-1}\sqrt{H})]
   \chi_{\widetilde{Q}_\mathbf{m}}\varphi_k(\sqrt{H})f(x)|^2(1+|x|)^{-\alpha}\mathrm{d}x\frac{\mathrm{d}t}{t}\\
 &\leq C\sum_{\mathbf{m}\in \mathbb{Z}^n}\Big(E^{1}_{j,k,\mathbf{m}}+E^{2}_{j,k,\mathbf{m}}\Big),
\end{align*}
where
 \begin{align*}
E^{1}_{j,k,\mathbf{m}}&:=\int_{2^{k-1}}^{2^{k+2}}\int_{\mathbb{R}^n}
   |\chi_{Q_\mathbf{m}}(b-b_{\widetilde{Q}_\mathbf{m}})\phi_{\delta,j}(t^{-1}\sqrt{H})\chi_{\widetilde{Q}_\mathbf{m}}
   \varphi_k(\sqrt{H})f(x)|^2(1+|x|)^{-\alpha}\mathrm{d}x\frac{\mathrm{d}t}{t},\\
E^{2}_{j,k,\mathbf{m}}&:=\int_{2^{k-1}}^{2^{k+2}}\int_{\mathbb{R}^n}
   |\chi_{Q_\mathbf{m}}\phi_{\delta,j}(t^{-1}\sqrt{H})(b-b_{\widetilde{Q}_\mathbf{m}})\chi_{\widetilde{Q}_\mathbf{m}}
   \varphi_k(\sqrt{H})f(x)|^2(1+|x|)^{-\alpha}\mathrm{d}x\frac{\mathrm{d}t}{t}.
 \end{align*}
We select $r$ and $q$ such that $1/2=1/q+1/r$ and $\alpha r/2<n$ so that we can use the Lemma~\ref{L8}. By H\"older's inequality, we see
\begin{align}\label{3a1}
E^{1}_{j,k,\mathbf{m}}&\leq \|b-b_{\widetilde{Q}_\mathbf{m}}\|_{L^{q}(\widetilde{Q}_\mathbf{m})}^2
      \left(\int_{\mathbb{R}^n}|T_{j,k}^{\delta}(\chi_{\widetilde{Q}_\mathbf{m}}\varphi_k(\sqrt{H})f)(x)|^r(1+|x|)^{-\alpha r/2}\mathrm{d}x\right)^{2/r}.
\end{align}
By Lemma~\ref{L8} and H\"older's inequality again, we obtain that
\begin{align}\label{3a2}
 &\left(\int_{\mathbb{R}^n}|T_{j,k}^{\delta}(\chi_{\widetilde{Q}_\mathbf{m}}\varphi_k(\sqrt{H})f)(x)|^r(1+|x|)^{-\alpha r/2}\mathrm{d}x\right)^{2/r} \nonumber\\
 & \leq C_{\varepsilon,N} 2^{2(j_0-j)N} A_n^\varepsilon(\delta,r\alpha/2)^{2/r}2^{2kn(1-\frac2r)}
 \left(\int_{\widetilde{Q}_\mathbf{m}}|\varphi_k(\sqrt{H})f(x)|^{r'}(1+|x|)^{-\alpha r'/2}\mathrm{d}x\right)^{2/{r'}}\nonumber\\
 &\leq  C_{\varepsilon,N}2^{2(j_0-j)N} A_n^\varepsilon(\delta,r\alpha/2)^{2/r}2^{2kn(1-\frac2r)}
   |\widetilde{Q}_\mathbf{m}|^{\frac2q}   \int_{\widetilde{Q}_\mathbf{m}}|\varphi_k(\sqrt{H})f(x)|^{2}(1+|x|)^{-\alpha}\mathrm{d}x.
\end{align}
By John-Nirenberg's inequality, we see
\begin{align}\label{3a3}
\|b-b_{\widetilde{Q}_\mathbf{m}}\|_{L^{q}(\widetilde{Q}_\mathbf{m})}^2  \leq C_q\|b\|_{\mathrm{BMO}}^2|\widetilde{Q}_\mathbf{m}|^{\frac2q}.
\end{align}
Recall that $2^{j_0}\approx \delta^{-1}$ and $\widetilde{Q}_\mathbf{m}$ contained in  a cube with side length $5\cdot2^{j-k+2}$. In combination with estimates~\eqref{3a1}, \eqref{3a2} and \eqref{3a3} and $2/q=1-2/r$, we obtain
\begin{align*}
E^{1}_{j,k,\mathbf{m}}&\leq C_{\varepsilon,r,N}\|b\|_{\mathrm{BMO}}^22^{2(j_0-j)(N-n(1-\frac2r))}  \delta^{-2n(1-\frac2r)}A_n^\varepsilon(\delta,r\alpha/2)^{\frac2r}
      \int_{\widetilde{Q}_\mathbf{m}}|\varphi_k(\sqrt{H})f(x)|^{2}(1+|x|)^{-\alpha}\mathrm{d}x.
\end{align*}
  Similarly, by H\"older's inequality, Lemma~\ref{L8} and John-Nirenberg's inequality, we have
\begin{align*}
E^{2}_{j,k,\mathbf{m}}&\leq C_{\varepsilon,r,N}\|b\|_{\mathrm{BMO}}^22^{2(j_0-j)(N-n(1-\frac2r))}  \delta^{-2n(1-\frac2r)}A_n^\varepsilon(\delta,r\alpha/2)^{\frac2r}
      \int_{\widetilde{Q}_\mathbf{m}}|\varphi_k(\sqrt{H})f(x)|^{2}(1+|x|)^{-\alpha}\mathrm{d}x.
\end{align*}
  {\bf{Case 1}}. $n\geq2$ and $1<\alpha<n$.

For any  $0<\upsilon\leq1/2$ and $1<\alpha<n$, we choose $r$ such that
$$2<r<\min\Bigg\{\frac{2n}{\alpha},\frac{2(2n+3/2)}{2n+3/2-\upsilon}\Bigg\} .$$
See \eqref{AAA} for the definition of $A_n^\varepsilon(\delta,r\alpha/2)$, by calculation, we have
$$
A_n^\varepsilon(\delta,r\alpha/2)^{2/r}\delta^{-2n(1-2/r)}=\delta^{(3-\alpha)/2-(1-2/r)(2n+3/2)}
  < \delta^{(3-\alpha)/2-\upsilon},
  $$
then there exists a constant $C_1=C_{\alpha,\upsilon,N}$ such that
\begin{align}\label{f2}
E^{1}_{j,k,\mathbf{m}}+E^{2}_{j,k,\mathbf{m}}&\leq C_1\|b\|_{\mathrm{BMO}}^2 2^{2(j_0-j)(N-n(1-\frac2r))} \delta^{\frac{3-\alpha}{2}-\upsilon}\int_{\widetilde{Q}_\mathbf{m}}|\varphi_k(\sqrt{H})f(x)|^{2}(1+|x|)^{-\alpha}\mathrm{d}x.
\end{align}
{\bf{Case 2}}. $n=1$ and $0<\alpha<1$.

For any small $0<\varepsilon\leq1/4$ and $0<\upsilon'\leq1/4$, we choose $r$ such that $2<r<\min\{2/\alpha,2(3-\varepsilon)/(3-\varepsilon-\upsilon')\}.$ By  calculation,
$$
  A_1^\varepsilon(\delta,r\alpha/2)^{2/r}\delta^{-2(1-2/r)}=\delta^{1-\varepsilon-(1-2/r)(3-\varepsilon)}
  < \delta^{1-\varepsilon-\upsilon'}\leq \delta^{1-\upsilon},
$$
where $0<\upsilon\leq1/2$. Then there exists a constant $C_2=C_{\alpha,\upsilon,N}$ such that
\begin{align}\label{f3}
E^{1}_{j,k,\mathbf{m}}+E^{2}_{j,k,\mathbf{m}}
&\leq C_2\|b\|_{\mathrm{BMO}}^2 2^{2(j_0-j)(N-n(1-\frac2r))} \delta^{1-\upsilon}     \int_{\widetilde{Q}_\mathbf{m}}|\varphi_k(\sqrt{H})f(x)|^{2}(1+|x|)^{-\alpha}\mathrm{d}x.
\end{align}

\hspace{-0.5cm} {\bf{Case 3}}. $n\geq1$ and $\alpha=0$.

For any small $0<\upsilon\leq1/2$, we just need to select the $r$ such that $2<r<2(2n+1)/(2n+1-\upsilon)$. Then there exists a constant $C_3=C_{0,\upsilon,N}$ such that
\begin{align}\label{f4}
E^{1}_{j,k,\mathbf{m}}+E^{2}_{j,k,\mathbf{m}}\leq C_{3}\|b\|_{\mathrm{BMO}}^2 2^{2(j_0-j)(N-n(1-\frac2r))} \delta^{1-\upsilon}
 \int_{\widetilde{Q}_\mathbf{m}}|\varphi_k(\sqrt{H})f(x)|^{2}\mathrm{d}x.
\end{align}
Next we sum up the terms $E^{1}_{j,k,\mathbf{m}}+E^{2}_{j,k,\mathbf{m}}$ over $j$ and $\mathbf{m}$. Combining estimates~\eqref{f2}, \eqref{f3} and \eqref{f4},  choosing $N$ such that $N>n(1-2/r)$ and using the fact that $\RR^n=\cup_{\mathbf{m}\in \ZZ^n}Q_\mathbf{m}$  and $\{\widetilde{Q}_\mathbf{m}\}$ has finite overlaps, then we obtain that there exists a constant $C_{\alpha,\upsilon}$ such that
 \begin{align}\label{3a4}
 I_2(k) &\leq C_4\left(\sum_{j\geq j_0}2^{(j_0-j)(N-n(1-2/r))} \right)^2B^{\upsilon}_{\alpha,n}(\delta)\|b\|_{\mathrm{BMO}}^2 \sum_{\mathbf{m}\in \mathbb{Z}^n}
    \int_{\widetilde{Q}_\mathbf{m}}|\varphi_k(\sqrt{H})f(x)|^{2}(1+|x|)^{-\alpha}\mathrm{d}x\nonumber\\
 &\leq  C_{\alpha,\upsilon}B^{\upsilon}_{\alpha,n}(\delta)\|b\|_{\mathrm{BMO}}^2
    \int_{\mathbb{R}^n}|\varphi_k(\sqrt{H})f(x)|^{2}(1+|x|)^{-\alpha}\mathrm{d}x,
 \end{align}
where $B^{\upsilon}_{\alpha,n}(\delta)$ is defined in \eqref{BBB} and $C_4=C_1+C_2+C_3$.

\hspace{-0.4cm}Finally, summing up the terms $I_2(k)$ over $k$ in estimate~\eqref{3a4} and using the estimate~\eqref{ee5} in Lemma~\ref{L6}, we see
 \begin{align}\label{f5}
 \sum_{k\geq0}I_2(k)
 &\leq C_{\alpha,\upsilon}B^{\upsilon}_{\alpha,n}(\delta)\|b\|_{\mathrm{BMO}}^2
  \sum_{k\geq0}\int_{\mathbb{R}^n}|\varphi_k(\sqrt{H})f(x)|^{2}(1+|x|)^{-\alpha}\mathrm{d}x\nonumber\\
 &\leq C_{\alpha,\upsilon}B^{\upsilon}_{\alpha,n}(\delta)\|b\|_{\mathrm{BMO}}^2\int_{\mathbb{R}^n}|f(x)|^{2}(1+|x|)^{-\alpha}\mathrm{d}x,
\end{align}
where   $n=1,0\leq\alpha<1$;   $n\geq2, 1< \alpha<n$ or $\alpha=0$.

\hspace{-0.4cm}Combining estimates~\eqref{f0}, \eqref{f1} and \eqref{f5} and noting
$A^{\upsilon}_{n}(\delta,\alpha)<B^{\upsilon}_{\alpha,n}(\delta)$, we obtain that for any $0<\upsilon\leq1/2$,
\begin{align*}
\int_{0}^{\infty}\int_{\mathbb{R}^n}|[b,\phi_{\delta}(t^{-1}\sqrt{H})]f|^2(1+|x|)^{-\alpha}\mathrm{d}x\frac{\mathrm{d}t}{t}
\leq C_{\alpha,\upsilon}
B^{\upsilon}_{\alpha,n}(\delta)\|b\|_{\mathrm{BMO}}^2 \int_{\mathbb{R}^n}|f(x)|^2(1+|x|)^{-\alpha}\mathrm{d}x.
\end{align*}
Hence, we obtain Proposition~\ref{Pro1} provided Lemmas~\ref{L7} and \ref{L8} are proved.
\end{proof}

Finally, let us prove Lemmas~\ref{L7} and \ref{L8}.
\begin{proof}[Proof of Lemma~\ref{L7}]
We discuss this lemma by distinguish two cases: $\alpha=0$ and $0<\alpha<n$.

\hspace{-0.4cm}{\bf {Case 1. $\alpha=0$}}.

Pick up a function $\psi\in C_c^{\infty}(\mathbb{R})$ with support $\{s:1\leq |s|\leq4\}$  such that
$\sum_{\ell\in\mathbb{Z}}\psi(2^{-\ell}s)=1$ for $ s>0.$ For any $\delta>0$, let $\psi_{\delta,\ell}(s)=\psi(2^{-\ell}\delta^{-1}(1-s))$ for all $\ell\geq1$ and $\psi_{\delta,0}(s)=\psi_0(\delta^{-1}(1-s))$, where $\psi_0(s)=1-\sum_{\ell\geq1}\psi(2^{-\ell}s)$. Then for $k\geq 0$ and $j\geq j_0$,
\begin{align}\label{a2}
&\int_{2^{k-1}}^{2^{k+2}}\int_{\mathbb{R}^n}|\phi_{\delta,j}(t^{-1}\sqrt{H})f(x)|^2\mathrm{d}x\frac{\mathrm{d}t}{t}\leq \left(\sum_{\ell\geq0}P_{k,j,\ell}(f) ^{1/2}\right)^2,
\end{align}
where
$$P_{k,j,\ell}(f)=\int_{2^{k-1}}^{2^{k+2}}\int_{\mathbb{R}^n}|(\phi_{\delta,j}\psi_{\delta,\ell})(t^{-1}\sqrt{H})f(x)|^2
\mathrm{d}x\frac{\mathrm{d}t}{t}.$$
Let $i=0,1,\cdots,i_0=[8\delta^{-1}]+1$, $I_i$ is defined by
$$I_i=[2^{k-1}+i2^{k-1}\delta,2^{k-1}+(i+1)2^{k-1}\delta].$$
We decompose $[2^{k-1},2^{k+2}]$ into some intervals $\{I_i\}$ with $[2^{k-1},2^{k+2}]\subseteq \bigcup_{i=0}^{i_0}I_i$. We see that the $\mathrm{d}t/t$ measure of the $I_i$ is less than $\delta$. We also define a function $\zeta_i$  associated with  $I_i$ by
 $$\zeta_i(s)=\zeta\left(i+\frac{2^{k-1}-s}{2^{k-1}\delta}\right),$$
 where $\zeta\in C_c^{\infty}([-1,1])$ and $\sum_{i\in\ZZ}\zeta(\cdot-i)=1$.
 Since  $\psi_{\delta,\ell}(t^{-1}s)\zeta_{i'}(s)= 0$ for every $t\in I_i$ if $i'\notin [i-2^{\ell+6}, i+2^{\ell+6}]$ , then for any $t\in I_i$,
 \begin{align}\label{a3}
 |(\phi_{\delta,j}\psi_{\delta,\ell})(t^{-1}\sqrt{H})f|^2
 &=\Big|\sum_{i'=i-2^{\ell+6}}^{i+2^{\ell+6}}(\phi_{\delta,j}\psi_{\delta,\ell})(t^{-1}\sqrt{H})\zeta_{i'}(\sqrt{H})f\Big|^2\nonumber\\
 &\leq C2^{\ell}\sum_{i'=i-2^{\ell+6}}^{i+2^{\ell+6}}\left|(\phi_{\delta,j}\psi_{\delta,\ell})(t^{-1}\sqrt{H})\zeta_{i'}(\sqrt{H})f\right|^2.
 \end{align}
Then
\begin{align}\label{a4}
P_{k,j,\ell}(f)\leq  C2^\ell\sum_{i=0}^{i_0}\sum_{i'=i-2^{\ell+6}}^{i+2^{\ell+6}}
\int_{I_i}\int_{\mathbb{R}^n}|(\phi_{\delta,j}\psi_{\delta,\ell})(t^{-1}\sqrt{H})\zeta_{i'}(\sqrt{H})f(x)|^2\mathrm{d}x\frac{\mathrm{d}t}{t}.
\end{align}
From estimate~\eqref{d1} and supp\,$\psi_{\delta,\ell}\subseteq [1-2^{\ell+2}\delta,1-2^{\ell}\delta]\cup[1+2^{\ell}\delta,1+2^{\ell+2}\delta]$ , the function $\phi_{\delta,j}\psi_{\delta,\ell}$ satisfies
\begin{align}\label{d2}
  \|(\phi_{\delta,j}\psi_{\delta,\ell})\|_{L^{\infty}(\RR)}\leq C2^{(j_0-j)N}2^{-\ell N}\ \  \mathrm{for}\  \ell\geq0,j\geq j_0,
\end{align}
which, together with  the $L^2$-boundedness of the  spectral multiplier implies that
 \begin{align}\label{a6}
 \|(\phi_{\delta,j}\psi_{\delta,\ell})(t^{-1}\sqrt{H})\zeta_{i'}(\sqrt{H})f\|_{L^2(\RR^n)}&\leq \|\phi_{\delta,j}\psi_{\delta,\ell}\|_{L^{\infty}(\RR)}\|\zeta_{i'}(\sqrt{H})f\|_{L^2(\RR^n)}\nonumber\\
 &\leq C_N2^{-\ell N}2^{(j_0-j)N}\|\zeta_{i'}(\sqrt{H})f\|_{L^2(\RR^n)}.
 \end{align}
 Taking estimate~\eqref{a6} into estimate~\eqref{a4} and using Minkowski's inequality give that
 \begin{align}\label{3b1}
P_{k,j,\ell}(f)
&\leq  C_N2^{-(2N-1)\ell}2^{2(j_0-j)N}\sum_{i=0}^{i_0}
\sum_{i'=i-2^{\ell+6}}^{i+2^{\ell+6}}\int_{I_i}\|\zeta_{i'}(\sqrt{H})f\|_{L^2(\RR^n)}^2\frac{\mathrm{d}t}{t}\nonumber\\
&= C_N2^{-(2N-1)\ell}2^{2(j_0-j)N}\sum_{i'=-2^{\ell+6}}^{i_0+2^{\ell+6}}
\sum_{\{i\in\NN\cap[0, i_0],\ |i-i'|\leq2^{\ell+6}\}}
\int_{I_i}\|\zeta_{i'}(\sqrt{H})f\|_{L^2(\RR^n)}^2\frac{\mathrm{d}t}{t}.
 \end{align}
The decomposition of $[2^{k-2},2^{k+1}]$ into $\{I_i\}_{i=0}^{i_0}$ makes that the $\mathrm{d}t/t$ measure of the interval $I_i$ is uniformly  less than $\delta$, indeed,
\begin{align*}
  \int_{2^{k-1}(1+i\delta)}^{2^{k-1}(1+(i+1)\delta)}1\ \frac{\mathrm{d}t}{t}=\ln\left(1+\frac{\delta}{1+i\delta}\right)\leq \ln(1+\delta)  \leq \delta.
\end{align*}
It yields that
\begin{align}\label{3b2}
\sum_{\{i\in\NN\cap[0, i_0],\ |i-i'|\leq2^{\ell+6}\}}
\int_{I_i}\|\zeta_{i'}(\sqrt{H})f\|_{L^2(\RR^n)}^2\frac{\mathrm{d}t}{t}
&\leq \delta\sum_{\{i\in\NN\cap[0, i_0],\ |i-i'|\leq2^{\ell+6}\}}
\|\zeta_{i'}(\sqrt{H})f\|_{L^2(\RR^n)}^2\nonumber\\
&\leq C2^{\ell}\delta
\|\zeta_{i'}(\sqrt{H})f\|_{L^2(\RR^n)}^2.
\end{align}
Combining estimates~\eqref{3b1}, \eqref{3b2} and using $L^2$-estimate of square function for Hermite operator give that
 \begin{align}\label{a5}
P_{k,j,\ell}(f)
&\leq C_N2^{-(2N-2)\ell}2^{2(j_0-j)N}\delta\sum_{i'\in\ZZ}
\|\zeta_{i'}(\sqrt{H})f\|_{L^2(\RR^n)}^2\nonumber\\
&\leq C_N2^{-(2N-2)\ell}2^{2(j_0-j)N}\delta\|f\|_{L^2(\RR^n)}^2.
\end{align}
Combining estimates~\eqref{a2} and \eqref{a5} and summing up the terms $P_{k,j,\ell}(f)$ over $\ell$ give that
\begin{align*}
&\int_{2^{k-1}}^{2^{k+2}} \int_{\mathbb{R}^n}|\phi_{\delta,j}(t^{-1}\sqrt{H})f(x)|^2\mathrm{d}x\frac{\mathrm{d}t}{t}\leq C_N2^{2(j_0-j)N}\delta\|f\|_{L^2(\RR^n)}^2.
\end{align*}

\hspace{-0.5cm}{\bf {Case 2. $0<\alpha<n$}}.
 We discuss the estimate into two cases: $2^{k}\geq2\delta^{-1/2}$ and $1\leq2^{k}<2\delta^{-1/2}$.

If $k\geq-\log_2\delta^{1/2}+1$,  we use \cite[(3.38) and (3.39)]{CDHLY} with $f$ in place of $\varphi_k(\sqrt{H})f$ to obtain
\begin{align}\label{ll0}
&\int_{2^{k-1}}^{2^{k+2}} \int_{\mathbb{R}^n}|\phi_{\delta,j}(t^{-1}\sqrt{H})f(x)|^2(1+|x|)^{-\alpha}\mathrm{d}x\frac{\mathrm{d}t}{t}\leq C_N2^{2(j_0-j)N}A^{\varepsilon}_{n}(\delta,\alpha)\int_{\mathbb{R}^n}|f(x)|^2(1+|x|)^{-\alpha}\mathrm{d}x.
\end{align}
The procedure is still valid if $\varphi_k(\sqrt{H})f$ in \cite[(3.38) and (3.39)]{CDHLY} is replaced  by $f$. This procedure rested on the trace lemma, the weighted Plancherel-type estimate~\eqref{e4} and the localization strategy which is based on finite speed of propagation. Indeed, estimate~\eqref{e4} gives the bound of spectral multipliers from $L^2(\RR^n)$ to $L^2(\RR^n,(1+|x|)^{-\alpha})$. To come back to $L^2(\RR^n,(1+|x|)^{-\alpha})$, when the physical space is  near $\{x\in\RR^n:|x|\leq 2^{j-k+2}\}$, it added the weight $(1+|x|)^{-\alpha}$ into the integral, which would bring a factor $2^{(j-k+1)\alpha}$ for $j>k$.  This factor is advantageous when  $2^{k}\geq2\delta^{-1/2}$, however it's helpless when   $1\leq2^{k}<2\delta^{-1/2}$. To overcome this, based on the Lemma~\ref
{L2}, we would instead consider the operator:
$\int_{2^{k-1}}^{2^{k+2}} |\phi_{\delta,j}(t^{-1}\sqrt{H})(I+H)^{\alpha/4}
f(x)|^2\mathrm{d}t/t$
and estimate its bound from $L^2(\RR^n)$ to $L^2(\RR^n,(1+|x|)^{-\alpha})$.

 If $0\leq k<-\log_2\delta^{1/2}+1$,  we claim that for any $0<\varepsilon\leq1/2$, there exists a constant $C_{\varepsilon,N}$ such that
\begin{align}\label{ll1}
&\int_{2^{k-1}}^{2^{k+2}} \int_{\mathbb{R}^n}|\phi_{\delta,j}(t^{-1}\sqrt{H})(I+H)^{\alpha/4}
f(x)|^2(1+|x|)^{-\alpha}\mathrm{d}x\frac{\mathrm{d}t}{t}\leq C_{\varepsilon,N}2^{2(j_0-j)N}A^{\varepsilon}_{n}(\delta,\alpha)\int_{\mathbb{R}^n}|f(x)|^2\mathrm{d}x.
\end{align}
Using estimate~\eqref{ll1} and  Lemma~\ref{L2}, we have
\begin{align*}
\int_{2^{k-1}}^{2^{k+2}} \int_{\mathbb{R}^n}|\phi_{\delta,j}(t^{-1}\sqrt{H})
f(x)|^2(1+|x|)^{-\alpha}\mathrm{d}x\frac{\mathrm{d}t}{t}&\leq C_{\varepsilon,N}2^{2(j_0-j)N}A^{\varepsilon}_{n}(\delta,\alpha)\int_{\mathbb{R}^n}|(I+H)^{-\alpha/4}f(x)|^2\mathrm{d}x\\
&\leq C_{\varepsilon,N}2^{2(j_0-j)N}A^{\varepsilon}_{n}(\delta,\alpha)\int_{\mathbb{R}^n}|f(x)|^2(1+|x|)^{-\alpha}\mathrm{d}x,
\end{align*}
which in combination with estimate~\eqref{ll0} proves Lemma~\ref{L7}.

We now turn to verify the claim. Similar to the discussion of  estimate~\eqref{a2}, we see
\begin{align}\label{b1}
&\int_{2^{k-1}}^{2^{k+2}} \int_{\mathbb{R}^n}|\phi_{\delta,j}(t^{-1}\sqrt{H})(I+H)^{\alpha/4}f(x)|^2
(1+|x|)^{-\alpha}\mathrm{d}x\frac{\mathrm{d}t}{t}\leq \left(\sum_{\ell\geq0}  Q_{j,k,\ell}(f)^{1/2} \right)^2,
\end{align}
where
$$Q_{j,k,\ell}(f)=\int_{2^{k-1}}^{2^{k+2}} \int_{\mathbb{R}^n}|(\phi_{\delta,j}\psi_{\delta,\ell})(t^{-1}\sqrt{H})(I+H)^{\alpha/4}f(x)|^2
(1+|x|)^{-\alpha}\mathrm{d}x\frac{\mathrm{d}t}{t}.$$
Arguing as estimates~\eqref{a3} and \eqref{a4}, we have
\begin{align*}
Q_{j,k,\ell}(f)\leq C2^{\ell}\sum_{i=0}^{i_0}\sum_{i'=i-2^{\ell+6}}^{i+2^{\ell+6}}
\int_{I_i}\int_{\mathbb{R}^n}|(\phi_{\delta,j}\psi_{\delta,\ell})(t^{-1}\sqrt{H})(I+H)^{\alpha/4}
\zeta_{i'}(\sqrt{H})f(x)|^2(1+|x|)^{-\alpha}\mathrm{d}x\frac{\mathrm{d}t}{t}.
\end{align*}
The function $\psi_{\delta,\ell}(t^{-1}s)$ is support in $[t(1-2^{\ell+2}\delta),t(1+2^{\ell+2}\delta)]$. Let $R=1+[t(1+2^{\ell+2}\delta)]$. When $n\geq2$, using  estimate~\eqref{e4} where $1<\alpha<n$, then we have
\begin{align}\label{b2}
&\int_{\mathbb{R}^n}|(\phi_{\delta,j}\psi_{\delta,\ell})(t^{-1}\sqrt{H})(I+H)^{\alpha/4}\zeta_{i'}(\sqrt{H})f(x)|^2(1+|x|)^{-\alpha}\mathrm{d}x\nonumber\\
&\leq R\|(\phi_{\delta,j}\psi_{\delta,\ell})(t^{-1}Rs)(I+|Rs|^2)^{\alpha/4}\|_{R^2,_2}^2\|\zeta_{i'}(\sqrt{H})f\|_{L^2(\RR^n)}^2.
\end{align}
We see that
\begin{align}\label{3c1}
&\|(\phi_{\delta,j}\psi_{\delta,\ell})(t^{-1}Rs)(I+|Rs|^2)^{\alpha/4}\|_{R^2,_2}^2\nonumber\\
&\leq \|(\phi_{\delta,j}\psi_{\delta,\ell})(t^{-1}Rs)(I+|Rs|^2)^{\alpha/4}\|_{L^\infty(\RR)}^2
\|\chi_{[tR^{-1}(1-2^{\ell+2}\delta),tR^{-1}(1+2^{\ell+2}\delta)]}\|_{R^2,_2}^2.
\end{align}
From the estimate~\eqref{d2} and the support property of $\psi_{\delta,\ell}$, we know that
\begin{align}\label{m1}
\|(\phi_{\delta,j}\psi_{\delta,\ell})(t^{-1}Rs)(1+|Rs|^2)^{\alpha/4}\|_{L^\infty(\RR)}^2\leq C_N2^{-2\ell N}2^{2(j_0-j)N}R^{\alpha}.
\end{align}
The length of the interval $[tR^{-1}(1-2^{\ell+2}\delta),tR^{-1}(1+2^{\ell+2}\delta)]$ is $8tR^{-1}2^\ell\delta$, thus  $$\#\Big\{i\in \ZZ:\Big[\frac{i-1}{R^2},\frac{i}{R^2}\Big]\cap\left[tR^{-1}(1-2^{\ell+2}\delta),tR^{-1}(1+2^{\ell+2}\delta)\right]\neq \emptyset\Big\}\leq 1+8Rt2^\ell\delta,$$
which implies that
\begin{align}\label{m2}
\big\|\chi_{[tR^{-1}(1-2^{\ell+2}\delta),tR^{-1}(1+2^{\ell+2}\delta)]}\big\|_{R^2,_2}^2
&=\frac{1}{R^2}\sum_{i=-R^2+1}^{R^2}\sup_{s\in[\frac{i-1}{R^2},\frac{i}{R^2})}
\big|\chi_{[tR^{-1}(1-2^{\ell+2}\delta),tR^{-1}(1+2^{\ell+2}\delta)]}(s)\big|^2\nonumber\\
&\leq R^{-2}\min\{R^2,(1+8Rt2^\ell\delta)\}.
\end{align}
Combining estimates~\eqref{3c1}-\eqref{m2} gives that
\begin{align*}
{\rm{RHS} \ of \  \eqref{b2}}
&\leq C_N2^{-2\ell N}2^{2(j_0-j)N}R^{\alpha-1}\min\{R^2,(1+Rt2^\ell\delta)\}\|\zeta_{i'}(\sqrt{H})f\|_{L^2(\RR^n)}^2.
\end{align*}
If $2^\ell\delta\leq1$, note that $1/2\leq t\approx2^k\leq C\delta^{-1/2}$, it's easy to see that $R\approx t\approx 2^k\leq C\delta^{-1/2}$, then
$$R^{\alpha-1}\min\{R^2,(1+Rt2^\ell\delta)\}\leq R^{\alpha-1}(1+R^22^{\ell}\delta)
\leq C2^{k(\alpha-1)}(1+2^{\ell+2k}\delta)
\leq C2^{\ell}\delta^{(1-\alpha)/2}.$$
If $2^\ell\delta\geq1$,  note that $1/2\leq t\approx 2^k\leq C\delta^{-1/2}$, it's easy to see that $R\leq Ct2^\ell\delta$, then
$$R^{\alpha-1}\min\{R^2,(1+Rt2^\ell\delta)\}\leq CR^{\alpha+1}\leq C(t2^\ell\delta)^{\alpha+1}
\leq C(2^{\ell+k}\delta)^{\alpha+1}
\leq 2^{(\alpha+1)\ell}\delta^{(\alpha+1)/2}.$$
Therefore, if $2^k\leq C\delta^{-1/2}$,
\begin{align}\label{m3}
R^{\alpha-1}\min\{R^2,(1+Rt2^\ell\delta)\}\leq C2^{(\alpha+1)\ell}\delta^{(1-\alpha)/2}.
\end{align}
As a consequence, for any $N>(\alpha+3)/2$
\begin{align*}
{\rm{RHS} \ of \  \eqref{b2}}&\leq C_N2^{-\ell(2N-\alpha-1)}2^{2(j_0-j)N}\delta^{(1-\alpha)/2}\|\zeta_{i'}(\sqrt{H})f\|_{L^2(\RR^n)}^2,
\end{align*}
which in combination with the $\mathrm{d}t/t$ measure of $I_i$ is uniformly less than $\delta$, Fubini's theorem and the $L^2$-estimate of square function  for Hermite operator yields
\begin{align}\label{b3}
Q_{j,k,\ell}(f)&\leq C_N2^{-\ell(2N-\alpha-2)}2^{2(j_0-j)N} \delta^{(1-\alpha)/2}\sum_{i=0}^{i_0}\sum_{i'=i-2^{\ell+6}}^{i+2^{\ell+6}}
\int_{I_i}\|\zeta_{i'}(\sqrt{H})f\|_{L^2(\RR^n)}^2\frac{\mathrm{d}t}{t}\nonumber\\
&=C_N2^{-\ell(2N-\alpha-2)}2^{2(j_0-j)N} \delta^{(1-\alpha)/2}\sum_{i'=-2^{\ell+6}}^{i_0+2^{\ell+6}}\sum_{\{i\in\NN\cap[0, i_0],\ |i-i'|\leq2^{\ell+6}\}}
\int_{I_i}\|\zeta_{i'}(\sqrt{H})f\|_{L^2(\RR^n)}^2\frac{\mathrm{d}t}{t}\nonumber\\
&\leq C_N2^{-\ell(2N-\alpha-2)}2^{2(j_0-j)N} \delta^{(1-\alpha)/2}2^{\ell}\delta\sum_{i'\in \ZZ}
\|\zeta_{i'}(\sqrt{H})f\|_{L^2(\RR^n)}^2\nonumber\\
&\leq C_N2^{-\ell(2N-\alpha-3)}2^{2(j_0-j)N} \delta^{(3-\alpha)/2}\|f\|_{L^2(\RR^n)}^2.
\end{align}
Putting the estimate~\eqref{b3} into estimate~\eqref{b1} and  taking the sum over $\ell$ yield
\begin{align}\label{b4}
\int_{2^{k-1}}^{2^{k+2}} \int_{\mathbb{R}^n}|\phi_{\delta,j}(t^{-1}\sqrt{H})(I+H)^{\alpha/4}f(x)|^2
(1+|x|)^{-\alpha}\mathrm{d}x\frac{\mathrm{d}t}{t}\leq C_N2^{2(j_0-j)N}\delta^{(3-\alpha)/2}\|f\|_{L^2(\RR^n)}^2.
\end{align}
When $n=1$, this situation can be showed in the same manner as before, the difference is that we use the case  where $0<\alpha<1$ in estimate~\eqref{e4}. Similar to the discussion of \eqref{m2}, we have
\begin{align}\label{m5}
\big\|\chi_{[tR^{-1}(1-2^{\ell+2}\delta),tR^{-1}(1+2^{\ell+2}\delta)]}\big\|_{R^2,_{\frac{2(1+\varepsilon)}{\alpha}}}^2
\leq \left(R^{-2}\min\{R^2,(1+8Rt2^\ell\delta)\}\right)^{\frac{\alpha}{1+\varepsilon}}.
\end{align}
From  estimate~\eqref{e4} for the case $n=1,\ 0<\alpha<1$ and estimates~\eqref{m1} and \eqref{m5} , then for any  $\varepsilon>0$, there exist  constants $C_{\varepsilon}$ and $C_{\varepsilon,N}$  such that
\begin{align}\label{m4}
&\int_{\mathbb{R}^n}|(\phi_{\delta,j}\psi_{\delta,\ell})(t^{-1}\sqrt{H})(I+H)^{\alpha/4}\zeta_{i'}(\sqrt{H})f(x)|^2(1+|x|)^{-\alpha}\mathrm{d}x\nonumber\\
&\leq C_{\varepsilon} R^{\frac{\alpha}{1+\varepsilon}}\|(\phi_{\delta,j}\psi_{\delta,\ell})(t^{-1}Rs)(I+|Rs|^2)^{\alpha/4}
\|_{R^2,_{\frac{2(1+\varepsilon)}{\alpha}}}^2\|\zeta_{i'}(\sqrt{H})f\|_{L^2(\RR^n)}^2\nonumber\\
&\leq C_{\varepsilon,N}2^{-2\ell N}2^{2(j_0-j)N}
\left(R^{\varepsilon}\min\{R^2,(1+Rt2^\ell\delta)\}\right)^{\frac{\alpha}{1+\varepsilon}}
\|\zeta_{i'}(\sqrt{H})f\|_{L^2(\RR^n)}^2.
\end{align}
Similarly to the discussion of \eqref{m3}, if $2^k\leq C\delta^{-1/2}$, we have
\begin{align}\label{m6}
  R^{\varepsilon}\min\{R^2,(1+Rt2^\ell\delta)\}\leq 2^{(2+\varepsilon)\ell}\delta^{-\varepsilon/2}.
\end{align}
Putting estimate~\eqref{m6} into estimate~\eqref{m4} yields
\begin{align*}
{\rm{RHS} \ of \ \eqref{m4}}&\leq C_{\varepsilon,N}2^{-\ell \big(2N-\alpha\frac{2+\varepsilon}{1+\varepsilon}\big)}2^{2(j_0-j)N}\delta^{-\frac{\alpha\varepsilon}{2(1+\varepsilon)}}\|\zeta_{i'}(\sqrt{H})f\|_{L^2(\RR^n)}^2\\
&\leq C_{\varepsilon,N}2^{-2\ell (N-\alpha)}2^{2(j_0-j)N}\delta^{-\varepsilon}\|\zeta_{i'}(\sqrt{H})f\|_{L^2(\RR^n)}^2.
\end{align*}
In the same manner as estimates~\eqref{b3} and \eqref{b4}, we have
 $$Q_{j,k,\ell}(f)\leq C_{\varepsilon,N}2^{-\ell (2N-2\alpha-2)}2^{2(j_0-j)N}\delta^{1-\varepsilon}\|f\|_{L^2(\RR^n)}^2, $$
 and
 \begin{align*}
\int_{2^{k-1}}^{2^{k+2}} \int_{\mathbb{R}^n}|\phi_{\delta,j}(t^{-1}\sqrt{H})(I+H)^{\alpha/4}f(x)|^2
(1+|x|)^{-\alpha}\mathrm{d}x\frac{\mathrm{d}t}{t}\leq C_{\varepsilon,N}2^{2(j_0-j)N}\delta^{1-\varepsilon }\|f\|_{L^2(\RR^n)}^2,
\end{align*}
which in combination with estimate~\eqref{b4} yields \eqref{ll1}, so the claim holds. The proof of Lemma~\ref{L7} is complete.
\end{proof}

\begin{proof}[Proof of Lemma~\ref{L8}]
 The proof of the lemma is inspired by \cite{HL3}. For convenience, we  use $\|T\|_{p\to q} $ for the operator norm of $T$ if $T$ is a bounded linear operator from $L^p(\mathbb{R}^n)$ to $L^q(\mathbb{R}^n)$ for given $1\leq p,q\leq\infty$. We will firstly obtain that $T_{j,k}^{\delta}$ is bounded from $L^1(\mathbb{R}^n)$ to $L^{\infty}(\mathbb{R}^n)$. To do so, we need to estimate $\|\phi_{\delta,j}(t^{-1}\sqrt{H})\|_{1\rightarrow\infty}$.
\begin{align*}
  \|\phi_{\delta,j}(t^{-1}\sqrt{H})\|_{1\rightarrow\infty}&\leq \|\phi_{\delta,j}^{1/2}(t^{-1}\sqrt{H})\|_{1\rightarrow2}\|\phi_{\delta,j}^{1/2}(t^{-1}\sqrt{H})\|_{2\rightarrow\infty}\nonumber\\
  &\leq \|\phi_{\delta,j}^{1/2}(t^{-1}\sqrt{H})\|^2_{2\rightarrow\infty}\leq\left(\sum_{\ell\geq0}\|(\phi_{\delta,j}^{1/2}\psi_{\delta,\ell})(t^{-1}\sqrt{H})\|_{2\rightarrow\infty}\right)^2.
\end{align*}
By \cite[Lemma~2.2]{DOS} and  estimate~\eqref{d2}, we have
\begin{align*}
\|(\phi_{\delta,j}^{1/2}\psi_{\delta,\ell})(t^{-1}\sqrt{H})\|_{2\rightarrow\infty}&\leq \left(\int_{\mathbb{R}^n}|K_{(\phi^{1/2}_{\delta,j}\psi_{\delta,\ell})(t^{-1}\sqrt{H})}(x,y)|^2\mathrm{d}y\right)^{1/2}\\
&\leq t^{n/2}\|\phi_{\delta,j}^{1/2}\psi_{\delta,\ell}\|_{L^\infty(\RR)}\leq C_Nt^{n/2}2^{-\ell N/2}2^{(j_0-j)N/2}.
\end{align*}
Summing the terms $\|(\phi_{\delta,j}^{\frac12}\psi_{\delta,\ell})(t^{-1}\sqrt{H})\|_{2\rightarrow\infty}$ over $\ell$ gives that
$$\|\phi_{\delta,j}(t^{-1}\sqrt{H})\|_{1\rightarrow\infty}\leq C_N t^n2^{(j_0-j)N}.$$
Finally, for any $N\in\mathbb{N}$ and $j\geq j_0$,
 \begin{align}\label{b6}
    \|T_{j,k}^{\delta}(f)\|_{L^\infty(\RR^n)}
   & \leq  \left(\int_{2^{k-1}}^{2^{k+2}} ||\phi_{\delta,j}(t^{-1}\sqrt{H})f\|_{L^\infty(\RR^n)}^2\frac{\mathrm{d}t}{t}\right)^{1/2} \leq C_N2^{kn}2^{(j_0-j)N}\|f\|_{L^1(\RR^n)}.
  \end{align}
$T_{j,k}^{\delta}$ is a sub-linear operator. Using interpolation(see \cite[Theorem 2.11]{St3} or \cite[Page 120]{BL}) with estimate~\eqref{b6} and Lemma~\ref{L7}, we obtain that
 \begin{align*}
\|T_{j,k}^{\delta}(f)\|_{L^r(\mathbb{R}^n,(1+|x|)^{-\alpha})}\leq
C_{\varepsilon,N}2^{(j_0-j)N}2^{(1-\theta) kn}A^{\varepsilon}_{n}(\delta,\alpha)^{\theta/2}\|f\|_{L^{r'}(\mathbb{R}^n,(1+|x|)^{-\alpha(r'-1)})},
 \end{align*}
 where  $n=1,0\leq\alpha<1$;   $n\geq2, 1< \alpha<n$ or $\alpha=0$; $1/r=\theta/2$ and $2< r< \infty$.
 \end{proof}


\section{ Proof of Theorem~\ref{thm1.1}}

We now begin to prove Theorem~\ref{thm1.1}.
\begin{proof} Observe that for any $\lambda>0$ and  $\rho<\lambda$
  \begin{align*}
    \left(1-\frac{m^2}{R^2}\right)^{\lambda}_+ & =\widetilde{C}_{\lambda,\,\rho} R^{-2\lambda}\int_{0}^{R}(R^2-t^2)^{\lambda-\rho-1}t^{2\rho+1}\left(1-\frac{m^2}{t^2}\right)^\rho_+\mathrm{d}t.
  \end{align*}
For $\rho<\lambda-1/2$ and $\rho>-1/2$,   we apply  the H\"older inequality to obtain
\begin{align}\label{4a}
  |\,[b,S_R^{\lambda}(H)]f|&=\widetilde{C}_{\lambda,\,\rho}R^{-2\lambda}\big|\int_{0}^{R}(R^2-t^2)^{\lambda-\rho-1}t^{2\rho+1}
  [b,S_t^{\rho}(H)]f\mathrm{d}t\big|\nonumber\\
  &\leq C_{\lambda,\,\rho}\left(\frac1R\int_{0}^{R}|\,[b,S_t^{\rho}(H)]f|^2\mathrm{d}t\right)^{1/2}.
\end{align}
 Note that $x_+^{\rho}=\sum_{k\in\mathbb{Z}}2^{-k\rho}\phi(2^kx)$ for some $\phi\in C_c^{\infty} ([1/8,1/2])$. Let
 $\phi_k(s)=\phi(2^k(1-s^2))$, $k\geq1$ and $\phi_0(s)=\sum_{k\leq0}2^{-k\rho}\phi(2^k(1-s))$, for $s>0$. Then
 \begin{align}\label{4b}
[b,S_t^{\rho}(H)]f=[b,\phi_0(t^{-2}H)]f+\sum_{k\geq1}2^{-k\rho}[b,\phi_k(t^{-1}\sqrt{H})]f.
\end{align}
It follows from estimate~\eqref{4a} and equality~\eqref{4b} that
 \begin{align*}
   \sup_{R>0}|[b,S_R^{\lambda}(H)]f|  \leq&C_{\lambda,\,\rho}\left( \sup_{t>0}|[b,\phi_0(t^{-2}H)]f|
   +\sum_{k\geq1}2^{-k\rho}\left(\int_{0}^{\infty}|\,[b,\phi_k(t^{-1}\sqrt{H})]f|^2\frac{\mathrm{d}t}{t}\right)^{1/2}\right).
 \end{align*}
 Note that $\phi_0\in C_c^{\infty}$ with support $\{s:0\leq|s|\leq1\}$. By Lemma~\ref{L4}, for any $0\leq\alpha<n$
\begin{align}\label{4d}
\Big\|\sup_{t>0}\left|[b,\phi_0(t^{-2}H)]f\right|\ \Big\|_{L^2(\mathbb{R}^n,(1+|x|)^{-\alpha})}
 \leq C\|b\|_{\mathrm{BMO}}\|f\|_{L^2(\mathbb{R}^n,(1+|x|)^{-\alpha})}.
\end{align}
Let $\rho=\lambda-1/2-\eta$ for some $\eta>0$. If $\alpha=0$ and $n\in \NN^+$, by Proposition~\ref{Pro1}, for any $0<\upsilon\leq1/2$ there exists a constant $C_{0,\upsilon}>0$ such that
\begin{align}\label{4e}
\left\|\sum_{k\geq1}2^{-k\rho}\left(\int_{0}^{\infty}|[b,\phi_k(t^{-1}\sqrt{H})]f|^2
\frac{\mathrm{d}t}{t}\right)^{\frac12}\right\|_{L^2(\RR^n)}
&\leq \sum_{k\geq1}2^{-k\rho}\left\|\left(\int_{0}^{\infty}|[b,\phi_k(t^{-1}\sqrt{H})]f|^2
\frac{\mathrm{d}t}{t}\right)^{\frac12}\right\|_{L^2(\RR^n)}\nonumber\\
&\leq C_{0,\upsilon}\|b\|_{\mathrm{BMO}}\sum_{k\geq1}2^{-k(\lambda-\eta-\upsilon)}\|f\|_{L^2(\RR^n)}.
\end{align}
Then the $\mathrm{RHS}$ of \eqref{4e} is bounded by $C_{0,\upsilon}\|b\|_{\mathrm{BMO}}\left\|f\right\|_{L^2(\mathbb{R},(1+|x|)^{-\alpha})}$ provided  $\eta,\upsilon$ are small enough. From estimates~\eqref{4d} and \eqref{4e}, we can conclude that  $\sup_{R>0}|[b,S_R^{\lambda}(H)]f|$ is bounded on $L^2(\mathbb{R}^n)$ for any $n\in \mathbb{N}^+$.

Similarly, if $n=1$ and $0<\alpha<1$, by Proposition~\ref{Pro1}, for any $0<\upsilon\leq1/2$ there exists  a constant $C_{\alpha,\upsilon}>0$ such that
\begin{align}\label{4f}
&\left\|\sum_{k\geq1}2^{-k\rho}\left(\int_{0}^{\infty}|[b,\phi_k(t^{-1}\sqrt{H})]f|^2
\frac{\mathrm{d}t}{t}\right)^{1/2}\right\|_{L^2(\mathbb{R},(1+|x|)^{-\alpha})}\nonumber\\
&\leq C_{\alpha,\upsilon}\|b\|_{\mathrm{BMO}}\sum_{k\geq1}2^{-k(\lambda-\eta-\upsilon)}\left\|f\right\|_{L^2(\mathbb{R},(1+|x|)^{-\alpha})}.
\end{align}
Then  the $\mathrm{RHS}$ of \eqref{4f} is bounded by $C_{\alpha,\upsilon}\|b\|_{\mathrm{BMO}}\left\|f\right\|_{L^2(\mathbb{R},(1+|x|)^{-\alpha})}$ provided  $\eta,\upsilon$ are small enough. From estimates~\eqref{4d} and \eqref{4f}, we can conclude that $\sup_{R>0}|[b,S_R^{\lambda}(H)]f|$ is bounded on $L^2(\mathbb{R},(1+|x|)^{-\alpha})$ for $0< \alpha<1$.

If $n\geq2$ and $1<\alpha<n$, by Proposition~\ref{Pro1}, for any $0<\upsilon\leq1/2$ there exists a constant $C_{\alpha,\upsilon}>0$ such that
\begin{align}\label{4g}
&\left\|\sum_{k\geq1}2^{-k\rho}\left(\int_{0}^{\infty}|[b,\phi_k(t^{-1}\sqrt{H})]f|^2
\frac{\mathrm{d}t}{t}\right)^{1/2}\right\|_{L^2(\mathbb{R}^n,(1+|x|)^{-\alpha})}\nonumber\\
&\leq C_{\alpha,\upsilon}\|b\|_{\mathrm{BMO}}\sum_{k\geq1}2^{-k(\lambda-\frac{\alpha-1}{4}-\eta-\upsilon)}\|f\|_{L^2(\mathbb{R}^n,(1+|x|)^{-\alpha})}.
\end{align}
Then the $\mathrm{RHS}$ of \eqref{4f} is bounded by $C_{\alpha,\upsilon}\|b\|_{\mathrm{BMO}}\left\|f\right\|_{L^2(\mathbb{R}^n,(1+|x|)^{-\alpha})}$ provided  $\eta,\upsilon$ are small enough. From estimates~\eqref{4d} and \eqref{4g}, we can conclude that $\sup_{R>0}|[b,S_R^{\lambda}(H)]f|$ is bounded on $L^2(\mathbb{R}^n,(1+|x|)^{-\alpha})$ for $1< \alpha<n$ and $\lambda>(\alpha-1)/4$ whenever $\lambda>(\alpha-1)/4$. Combining $\sup_{R>0}|[b,S_R^{\lambda}(H)]f|$ is bounded on $L^2(\RR^n)$, we now use the complex interpolation method to obtain that $\sup_{R>0}|[b,S_R^{\lambda}(H)]f|$  is bounded on the
weighted space $L^2(\mathbb{R}^n,(1+|x|)^{-\alpha})$  for $0<\alpha\leq1$ when $n\geq2$. Firstly, we deal with $\sup_{R>0}|[b,S_R^{\lambda}(H)]f|$ by Kolmogorov-Seliverstov-Plessner linearization (see \cite[p.280]{St2}). In fact, we define $\mathcal{K}$ the nonnegative measurable function  on $\mathbb{R}^n$ which have only finite number of distinct values. Let $\mathcal{R}(x)\in\mathcal{K}$. It's claim that
$$\sup_{\mathcal{R}(x)\in\mathcal{K}}\|[b,S_{\mathcal{R}(x)}^{\lambda}(H)]f\|_{L^2(\mathbb{R}^n,(1+|x|)^{-\alpha})}= \|\sup_{R>0}|[b,S_R^{\lambda}(H)]f|\,\|_{L^2(\mathbb{R}^n,(1+|x|)^{-\alpha})}.$$
It's easy to see that
$$|[b,S_{\mathcal{R}(x)}^{\lambda}(H)]f(x)|\leq \sup_{R>0}|[b,S_R^{\lambda}(H)]f(x)|.$$
The converse inequality can be seen from the fact that we can always choose a series of function $\mathcal{R}_j(x)\in\mathcal{K}$ such that
$$\lim_{j\rightarrow+\infty}|[b,S_{\mathcal{R}_j(x)}^{\lambda}(H)]f(x)|= \sup_{R>0}|[b,S_R^{\lambda}(H)]f(x)|,\, \,\forall x\in\mathbb{R}^n.$$
 By Lebesgue's  dominated convergence theorem,
\begin{align*}
\|\sup_{R>0}|[b,S_R^{\lambda}(H)]f|\ \|_{L^2(\mathbb{R}^n,(1+|x|)^{-\alpha})}&=
\lim_{j\rightarrow+\infty}\|\ [b,S_{\mathcal{R}_j(x)}^{\lambda}(H)]f\|_{L^2(\mathbb{R}^n,(1+|x|)^{-\alpha})}\\
&\leq \sup_{\mathcal{R}(x)\in\mathcal{K}}\|\ [b,S_{\mathcal{R}(x)}^{\lambda}(H)]f\|_{L^2(\mathbb{R}^n,(1+|x|)^{-\alpha})}.
\end{align*}
Therefore, the boundeness of the sub-linear operator $\sup_{R>0}|[b,S_R^{\lambda}(H)]f|$ on $L^2(\mathbb{R}^n,(1+|x|)^{-\alpha})$ is equivalent to
$$\|[b,S_{\mathcal{R}(x)}^{\lambda}(H)]f\|_{L^2(\mathbb{R}^n,(1+|x|)^{-\alpha})}
\leq C\|f\|_{L^2(\mathbb{R}^n,(1+|x|)^{-\alpha})},$$
where the constant $C$ is independent of $\mathcal{R}(x)$. Then the weighted $L^2$ estimate of $[b,S_{\mathcal{R}(x)}^{\lambda}(H)]$ for $0<\alpha\leq1$ when $\lambda>0$ can be deduced from the complex interpolation theorem (see \cite[Theorem~2.11]{St3}).   As a consequence, we obtain that $\sup_{R>0}|[b,S_R^{\lambda}(H)]|$ is bounded on ${L^2(\mathbb{R}^n,(1+|x|)^{-\alpha})}$ for $0<\alpha\leq1$ if $\lambda>0$.
The proof of Theorem~\ref{thm1.1} is complete.
\end{proof}

\medskip

Finally, we give a proof of Corollary~\ref{coro1.2}.

\begin{proof}
For any $f\in C_c^\infty(\RR^n)$ and supp$f\subseteq B$ for some ball $B\subseteq \RR^n$, then $(b-b_B)f\in L^2(\RR^n)$ and $[b,S^{\lambda}_R(H)]f$ is well defined for any $\lambda>0$. By the almost everywhere convergence of Bochner-Riesz operator $S_R^\lambda(H)$(see \cite[Theorem~1.1]{CDHLY}), we have
$$\lim_{R\rightarrow\infty}[b,S^{\lambda}_R(H)]f(x)
=\lim_{R\rightarrow\infty}b(x)S^{\lambda}_R(H)(f)(x)-S^{\lambda}_R(H)(bf)(x)=0,\ \  \mathrm{a.e.} \ x\in\RR^n.
$$
Let $0\leq\alpha<n$. Theorem~\ref{thm1.1} implies that the almost everywhere convergence of $[b,S^{\lambda}_R(H)]f$ for any $f\in L^2(\RR^n,(1+|x|)^{-\alpha})$ if $\lambda>\max\{(\alpha-1)/4,0\}$.

For given $p\geq2$ and $\lambda>\lambda(p)/2$, we can choose $\alpha$ such that $n(1-2/p)<\alpha<4\lambda+1$ and $L^p(\mathbb{R}^n)\subseteq L^2(\mathbb{R}^n,(1+|x|)^{-\alpha})$.
Hence, the almost everywhere convergence of $[b,S^{\lambda}_R(H)]f$ holds for all $f\in L^p(\RR^n)$.
\end{proof}


\bigskip

\noindent
{\bf Acknowledgements}:
The authors would like to thank L. Yan for  helpful comments and suggestions.
P. Chen and X. Lin were supported by National Key R\&D Program of China 2022YFA1005702. P. Chen was supported by NNSF of China 12171489, Guangdong Natural Science Foundation 2022A1515011157.

\bigskip

\end{document}